\documentclass{amsart}
\usepackage[utf8]{inputenc}
\usepackage{color, amsmath,amssymb,hyperref}
\newtheorem{theorem}{Theorem}
\newtheorem{proposition}{Proposition}
\newtheorem{lemma}{Lemma}
\newtheorem{example}{Example}
\newtheorem{definition}{Definition}

\newtheorem{corollary}{Corollary}
\title{Optimal expansions of Kakeya sequences}
\author{Anna Chiara Lai and Paola Loreti}
\keywords{Kakeya sequence, non-integer base expansions, optimal expansions, unique expansions, Fibonacci sequence}
\date{\today}

\newcommand{\NN}{\mathbb N}
\subjclass{11A63, 11B39}
\begin{document}
\begin{abstract}
    We investigate optimal expansions of Kakeya sequences for the representation of real numbers. Expansions of Kakeya sequences generalize the expansions in non-integer bases and they display analogous redundancy phenomena. In this paper, we characterize optimal expansions of Kakeya sequences, and we provide conditions for the existence of unique expansions with respect to Kakeya sequences. 
\end{abstract}
\maketitle

\section{Introduction}
In this paper we investigate the optimality of expansions of the form 
\begin{equation}\label{kexp}
x=\sum_{i=1}^\infty c_ip_i\qquad (c_i)\in \{0,1\}^\NN \end{equation}
where $(p_i)$ is a \emph{Kakeya sequence} \cite{kakeya1,kakeya2}, namely a sequence of positive real numbers satisfying $p_i\to 0$ as $i\to+\infty$ and \begin{equation}\label{kdef}
  p_n\leq \sum_{i=n+1}^\infty p_i\qquad\text{for all }n\geq 1.  
\end{equation}
Examples of Kakeya expansions are provided by the classical $q$-expansions corresponding to the case $(p_i)=(q^{-i})$ with $q\in(0,1)$, first introduced in \cite{renyi}, and the Fibonacci expansions \cite{BKL21} corresponding to $(p_i)=(F^{-1}_i)$, where $(F_i)$ is the Fibonacci sequence. A classical result by Kakeya \cite{kakeya1,kakeya2} ensures that if $(p_i)$ is a Kakeya sequence then a real number $x$  admits at least one expansion of the form \eqref{kexp} if and only if $x\in[0,\sum_{i=1}^\infty p_i]$. A recent result by Baiocchi, Komornik and Loreti \cite{BKL21} ensures that, for a wide class of Kakeya sequences, every $x\in(0,\sum_{i=1}^\infty p_i)$ has indeed a continuum of different expansions, bringing to light redundancy phenomena already intensively investigated in the particular case of $q$-expansions \cite{EJK90,EK98,KL98,KL07,sidorov}.  So, if representations of the form \eqref{kexp} may well be redundant, it becomes relevant to ask whether some of such expansions satisfy some optimality criteria, or whether there exist Kakeya sequences admitting unique expansions. 


In particular, an expansion of the form \eqref{kexp} is optimal if, for all $n$, it minimizes the truncation error $x-\sum_{i=1}^n c_ip_i$ -- see Definition \ref{optdef} below for a precise statement. In the case of $q$-expansions, every $x\in[0,1/(q-1)]$ admits an optimal expansion, which coincides with the greedy expansion, if and only if the base $q$ belongs to a countable set of values, whose minimum is the Golden Mean \cite{DDKL12}, see Example \ref{ex1} below for further details.  Here we prove the following result.

\begin{theorem}\label{thm1}
Let $(p_i)$ be a strictly decreasing Kakeya sequence. Then every greedy expansion with respect to $(p_i)$ is also optimal if and only if there exists a nondecreasing sequence $(k_n)$ of  natural numbers such that
\begin{equation}
\label{except}
p_{n}=\sum_{i=n+1}^{n+1+k_n} p_i\qquad \text{for all $n\geq 1$.}
\end{equation}
\end{theorem}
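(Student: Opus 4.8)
The plan is first to recast optimality in a more workable form. Since every expansion $(c_i)$ of $x$ satisfies $\sum_{i=1}^{n}c_ip_i\le x$ for all $n$ (the discarded tail is nonnegative), minimizing the truncation error amounts to \emph{maximizing} each truncation $\sum_{i\le n}c_ip_i$; thus an expansion is optimal precisely when all of its truncations are as large as possible among expansions of $x$. I would also record two standard facts about the greedy expansion $g=g(x)$: it is the lexicographically largest expansion of $x$, and it is the unique expansion for which $g_n=0$ implies $\sum_{i>n}g_ip_i<p_n$.

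For the ``if'' direction I would assume \eqref{except} and use the \emph{carry}: the replacement of a block $(c_n,c_{n+1},\dots,c_{n+1+k_n})=(0,1,\dots,1)$ by $(1,0,\dots,0)$. By \eqref{except} this preserves $\sum_i c_ip_i$, it raises the truncations with $n\le N\le n+k_n$ and leaves all others unchanged, and it increases the sequence lexicographically. First one checks that $g$ admits no carry, since $g_n=0$ gives $\sum_{i>n}g_ip_i<p_n=p_{n+1}+\dots+p_{n+1+k_n}$, which precludes $g_{n+1}=\dots=g_{n+1+k_n}=1$. Then, fixing $N$, I would use compactness of the set of expansions of $x$ in $\{0,1\}^{\NN}$ to select an expansion $c^{\ast}$ maximizing $\sum_{i\le N}c_ip_i$ and, among those, lexicographically largest; by the two properties of the carry, $c^{\ast}$ admits no carry at all. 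The crucial — and hardest — step is then to prove $c^{\ast}=g$. Assuming not, let $j$ be the first index with $g_j=1$, $c^{\ast}_j=0$; with $v:=\sum_{i\ge j}g_ip_i=\sum_{i\ge j}c^{\ast}_ip_i\ge p_j$ one has $\sum_{i>j}c^{\ast}_ip_i=v\ge p_j$, and the absence of carries forces the zeros of $(c^{\ast}_i)_{i>j}$ to occur at positions $j<j_1<j_2<\cdots$ with $j_1\le j+1+k_j$ and $j_{\ell+1}\le j_\ell+1+k_{j_\ell}$, ones in between, infinitely many of them. Bounding the block of ones after each $j_\ell$ via \eqref{except} by $p_{j_\ell}-p_{j_{\ell+1}}$ and telescoping should give $\sum_{i>j}c^{\ast}_ip_i\le p_j$, with equality only in the fully ``periodic'' configuration, whose truncations stay strictly below those of $g$; either way this contradicts the maximality (or the feasibility $v\ge p_j$) of $c^{\ast}$. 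Hence $c^{\ast}=g$, so every truncation of $g$ is maximal, i.e.\ $g$ is optimal; the nondecreasing property of $(k_n)$ is exactly what the telescoping estimate needs.

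For the ``only if'' direction I would argue contrapositively. Since $\sum_{i=n+1}^{n+k}p_i$ strictly increases in $k$ to $\sum_{i>n}p_i\ge p_n$, for each $n$ either $p_n=\sum_{i=n+1}^{n+k(n)}p_i$ for a unique $k(n)$, or $p_n$ lies strictly between two consecutive such block-sums. In the latter case, say $\sum_{i=n+1}^{n+m-1}p_i<p_n<\sum_{i=n+1}^{n+m}p_i=:\tau$, I would take $x:=\sum_{i<n}p_i+\tau$: the greedy expansion grabs $p_1,\dots,p_n$ and then declines $p_{n+1},\dots,p_{n+m}$, whereas the expansion equal to $g$ on $\{1,\dots,n-1\}$, equal to $0$ at $n$, to $1$ at $n+1,\dots,n+m$, and to $0$ afterwards, also represents $x$ and has truncation $\sum_{i<n}p_i+\tau>\sum_{i<n}p_i+p_n$ at $N=n+m$, beating greedy. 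When instead all exponents $k(n)$ exist but $(k(n))$ fails to be nondecreasing at some $t$, I would exploit the inefficient overlap between the block realizing $p_t$ and the block realizing $p_{t+1}$ to construct an $x$ for which greedy, having taken $p_t$, is overtaken at a later truncation by a competitor that spreads its mass over the finer later blocks. Making this last construction precise — and checking that the failure of monotonicity genuinely obstructs optimality — is the point I expect to demand the most care.
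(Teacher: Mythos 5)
Your ``if'' direction takes a genuinely different route from the paper's, and as far as I can check it is sound. The paper fixes the greedy expansion $(c^*_i)$ and an arbitrary competitor $(c_i)$, sets $d_i:=c_i-c^*_i\in\{-1,0,1\}$, and proves $x-\sum_{i\le n}c^*_ip_i\le x-\sum_{i\le n}c_ip_i$ by induction on $n$, the inductive step resting on a technical lemma bounding $\sum_{i=N_0+1}^{N_H}d_ip_i$ along the block endpoints $N_{h+1}=N_h+1+k_{N_h}$ by $p_{N_0}-p_{N_H}$. Your carry operation together with the extremal (compact, then lexicographically largest) maximizer $c^{\ast}$ replaces that induction by a cleaner structural dichotomy, but the engine is the same telescoping along blocks: your estimate $\sum_{i=j_\ell+1}^{j_{\ell+1}-1}p_i\le p_{j_\ell}-p_{j_{\ell+1}}$ is precisely the content of the paper's lemma. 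The individual steps you list (the carry preserves the value and weakly increases every truncation, $c^{\ast}$ is carry-free, equality in the telescoped bound forces the periodic configuration whose finite truncations stay strictly below $p_j$) all check out, including the case $N<j$, which is handled by lexicographic maximality.

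The genuine problem is the second half of your ``only if'' direction, and it is worse than ``demanding care'': in the case you defer, the statement you are trying to prove is false, so no construction can close it. Look again at your own telescoping: it uses only $j_{\ell+1}\le j_\ell+1+k_{j_\ell}$ and the identity $p_{j_\ell}=\sum_{i=j_\ell+1}^{j_\ell+1+k_{j_\ell}}p_i$; monotonicity of $(k_n)$ is never invoked, contrary to your remark that it is ``exactly what the telescoping needs''. Hence your ``if'' argument already shows that the mere existence of \emph{some} sequence $(k_n)$ satisfying \eqref{except} (necessarily with $k_n\ge1$ and unique, by strict decrease) makes every greedy expansion optimal, nondecreasing or not. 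Non-monotone instances do exist: $p_1=2\varphi^{-2}$, $p_n=\varphi^{-n}$ for $n\ge2$ is a strictly decreasing Kakeya sequence with $p_1=p_2+p_3+p_4$ and $p_n=p_{n+1}+p_{n+2}$ for $n\ge2$, so $k_1=2$ and $k_n=1$ afterwards. You are in good company: the paper's own ``only if'' proof likewise treats only the case in which some $p_n$ falls strictly between two consecutive block sums, and its lemma never uses monotonicity either, so what both arguments actually establish is the equivalence with the existence of some $(k_n)$ satisfying \eqref{except}. The right move is therefore not to hunt for your second counterexample but to verify that your first construction (the one with $\sum_{i=n+1}^{n+m-1}p_i<p_n<\sum_{i=n+1}^{n+m}p_i$, which is correct and even avoids the paper's auxiliary minimality argument for $n$) exhausts the negation of that existence statement; the one loose end is the degenerate possibility $p_n=\sum_{i=n+1}^{\infty}p_i$ with no finite block sum equal to $p_n$, which your dichotomy (and the paper's) silently excludes.
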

As in the case of $q$-expansions, only greedy expansions of Kakeya sequences can be optimal -- see Proposition \ref{kakopt}, thus above the result fully characterizes the optimality of the greedy algorithm for strictly decreasing Kakeya sequences. As we remark in Section \ref{s3}, optimal $q$-expansions correspond to the case in which $(k_n)$ is constant. However, the study of general expansions of Kakeya sequences required an alternative methodology in the proofs due to the lack of shift-invariancy, making  arguments based on symbolic dynamics and on ergodic theory  hard to apply. We also anticipate to the reader that Fibonacci expansions are in general not optimal. A consequence of Theorem \ref{thm1} is the following error estimate.

\begin{corollary}\label{cor0}
Let $(p_i)$ be a strictly decreasing Kakeya sequence. Assume that  there exists a
 nondecreasing sequence $(k_n)$ of  natural numbers such that
\begin{equation}
\label{except0}
p_{n}=\sum_{i=n+1}^{n+1+k_n} p_i\qquad \text{for all $n\geq 1$.}
\end{equation}
Let $x\in[0,\sum_{i=1}^\infty p_i]$ and let $(c_i^*)$ and $(\hat c^*_{i})$ be the greedy and the lazy expansions of $x$, respectively. Define $\Theta^*_n:=x-\sum_{i=1}^n c_i^*p_i$ and $\hat \Theta^*_{n}:=x-\sum_{i=1}^n \hat c_i^*p_i$. Then any expansion $(c_i)$ of $x$ satisfies $$x-\sum_{i=1}^n c_ip_i\in [ \Theta^*_n,\hat \Theta^*_{n}]\quad \forall n\geq 1.$$
\end{corollary}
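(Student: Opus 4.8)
The plan is to establish the two inequalities $\Theta^*_n\le x-\sum_{i=1}^n c_i p_i$ and $x-\sum_{i=1}^n c_i p_i\le \hat\Theta^*_n$ separately. The left one is immediate: under hypothesis \eqref{except0}, Theorem \ref{thm1} guarantees that the greedy expansion $(c_i^*)$ of $x$ is optimal, which by Definition \ref{optdef} means exactly that $\Theta^*_n=x-\sum_{i=1}^n c_i^*p_i$ is the smallest truncation error at level $n$ among all expansions of $x$. (These errors are $\ge 0$ because $\sum_{i=1}^n c_ip_i\le\sum_{i=1}^\infty c_ip_i=x$, since $p_i>0$, so ``smallest'' is meaningful.) Hence the real content of the corollary is the right inequality, namely that the \emph{lazy} expansion produces the \emph{largest} truncation error at each level.

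For this I would use a reflection. Set $S:=\sum_{i=1}^\infty p_i$ and $T_n:=\sum_{i>n}p_i=S-\sum_{i=1}^n p_i$. If $(c_i)$ is an expansion of $x\in[0,S]$, then the complementary sequence $(1-c_i)$ is an expansion of $y:=S-x\in[0,S]$, and a one-line computation gives the identity
\begin{equation*}
x-\sum_{i=1}^n c_ip_i \;=\; T_n-\Big(y-\sum_{i=1}^n(1-c_i)p_i\Big)\qquad(n\ge 1).
\end{equation*}
In particular the admissibility constraint $0\le x-\sum_{i=1}^n c_ip_i\le T_n$ for $(c_i)$ is equivalent to the admissibility constraint $0\le y-\sum_{i=1}^n(1-c_i)p_i\le T_n$ for $(1-c_i)$, and at every step choosing the digit of $(c_i)$ as large as possible among admissible completions is the same as choosing the digit of $(1-c_i)$ as small as possible. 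I would make this precise by induction on $n$, using the step-by-step characterization of the greedy and lazy algorithms, to conclude that complementation interchanges them: $(1-\hat c_i^*)$ is the greedy expansion of $y$.

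With this in hand, apply the already-proved left inequality to $y$ — legitimate since $y\in[0,S]$ and the sequence $(p_i)$ is unchanged, so Theorem \ref{thm1} still applies — with the particular expansion $(1-c_i)$ of $y$:
\begin{equation*}
y-\sum_{i=1}^n(1-\hat c_i^*)p_i \;\le\; y-\sum_{i=1}^n(1-c_i)p_i .
\end{equation*}
Feeding this into the displayed identity, used once for $(c_i)$ and once for $(\hat c_i^*)$, yields $x-\sum_{i=1}^n c_ip_i\le x-\sum_{i=1}^n\hat c_i^*p_i=\hat\Theta^*_n$, and combining with the left inequality gives $x-\sum_{i=1}^n c_ip_i\in[\Theta^*_n,\hat\Theta^*_n]$ for all $n$.

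The main obstacle I anticipate is the reflection step itself — not the arithmetic, which is routine once the tail constraint is rewritten, but making rigorous that the greedy (resp.\ lazy) algorithm is genuinely determined by step-by-step maximality (resp.\ minimality) of the digit among all choices keeping the remaining tail $\le T_n$, so that complementation really does swap ``greedy'' and ``lazy''. One should also note in passing that greedy and lazy expansions of every $x\in[0,S]$ exist, which is where Kakeya's characterization of the set of representable numbers as $[0,S]$ is implicitly invoked.
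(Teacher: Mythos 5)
Your proposal is correct and follows essentially the same route as the paper: the lower bound comes from Theorem \ref{thm1} (optimality of the greedy expansion), and the upper bound comes from reflecting to $\bar x=\sum_{i=1}^\infty p_i-x$, noting that complementation turns the lazy expansion of $x$ into the greedy expansion of $\bar x$, and applying the lower bound there. The only cosmetic difference is that you propose to verify the greedy/lazy swap by induction on the step-by-step algorithms, whereas the paper gets it directly from the lexicographic definitions (complementation reverses lexicographic order), as already exploited in the proof of Proposition \ref{p2}(ii).
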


The aim of the present paper is twofold: on the one hand, we investigate optimality of Kakeya expansions, which is non-trivial since the Fibonacci expansions are not optimal. On the other hand, remarking the centrality of the Golden Mean also in the more general setting of Kakeya sequences, we investigate uniqueness phenomena.  

The paper is organized as follows. In Section \ref{s2}, we consider the \emph{greedy algorithm} for the expansions of Kakeya sequences, yielding the lexicographic greatest expansions.  In Section \ref{s2} we also investigate unique expansions with respect to Kakeya sequences: in Section \ref{s21} we present a general uniqueness criterion for Kakeya expansions, and our main results on the subject (Theorem \ref{thm1} and  Corollary \ref{cor1}) together with some examples of their application. In particular, we show that unique expansions do exist in the case of Tribonacci expansions, namely those expansions associated to the Kakeya sequence $(p_i):=(T_{i+1}^{-1})$ where $(T_i)$ is the Tribonacci sequence. In Section \ref{s41} we discuss a case which is not covered by Theorem \ref{thm2}, and we prove that there not exist unique expansions for the perturbations of the $\varphi$-expansions of the form $(p_i)=(\varphi^{-i}(1+(-1)^i\varepsilon_i))$, where $(\varepsilon_i)\subset(0,1)$. Finally in Section \ref{s42} we present the proofs of Theorem \ref{thm2} and of Corollary \ref{cor1}.  Section \ref{s3} is devoted the optimality of  expansions of Kakeya sequences. We show that only greedy expansions can be optimal, and we present two examples of application of Theorem \ref{thm1}, whose proof can be found in Section \ref{s3proofs}. 

\section{Greedy, lazy and unique expansions of Kakeya sequences}\label{s2}
Let $(p_i)$ be a Kakeya sequence. We call \emph{expansion} of a positive real number $x$ an expression of the form $x=\sum_{i=1}^\infty c_ip_i$, where $(c_i)$ is a binary sequence. When it is clear from the context, we use for brevity the term expansion also to refer to the corresponding digit sequence $(c_i)$. 
We recall that two sequences $(c_i)$ and $(c'_i)$ are lexicographically greater one than the other, namely $(c_i)>_{lex} (c'_i)$, if, given $N$ such that $c_N\neq c'_N$, we have $c_N>c_N'$.
We define the \emph{greedy (lazy) expansion} of $x$ with respect to $(p_i)$ as the lexicographically greatest (smallest) expansion of $x$. 

The next two results were showed in \cite{KL11}, see also \cite{K11} for an overview. For the reader's convenience we give Proposition \ref{p1} and Proposition \ref{p2} and their proofs.

\begin{proposition}[Greedy algorithm]\label{p1}
Let $(p_i)$ be a Kakeya sequence, and  $x\in[0,\sum_{i=1}^\infty p_i]$. Define the binary sequence $(c_i)$ 
as follows
\begin{equation}\label{ga}c_n:=\begin{cases}
1 & \text{ if } p_n+\sum_{\substack{
    i=1
}}^{n-1}c_ip_i\leq x\\
0 & \text{otherwise}.
\end{cases}
\end{equation}
Then 
$$x=\sum_{i=1}^\infty c_ip_i$$
and the above is the lexicographically greatest expansion of $x$ (with respect to $(p_i)$). 
\end{proposition}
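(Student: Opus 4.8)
The plan is to follow the partial sums $S_n:=\sum_{i=1}^n c_ip_i$ produced by the recursion \eqref{ga} and to prove two invariants by induction on $n$. The first, $S_n\le x$ for all $n$, is immediate from \eqref{ga}: at step $n$ the digit $p_n$ is added only when $S_{n-1}+p_n\le x$, and otherwise $S_n=S_{n-1}$. The second invariant, which is where the defining Kakeya inequality \eqref{kdef} is used, is
$$0\le x-S_n\le \sum_{i=n+1}^\infty p_i\qquad\text{for all }n\ge 0.$$
The base case $n=0$ is exactly the hypothesis $x\le\sum_{i=1}^\infty p_i$. For the inductive step: if $c_n=1$, then $x-S_n=(x-S_{n-1})-p_n\le \sum_{i=n}^\infty p_i-p_n=\sum_{i=n+1}^\infty p_i$; if $c_n=0$, then \eqref{ga} gives $x-S_{n-1}<p_n$, so $x-S_n=x-S_{n-1}<p_n\le\sum_{i=n+1}^\infty p_i$ by \eqref{kdef}.

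Next I would pass to the limit to get $x=\sum_{i=1}^\infty c_ip_i$. The sequence $(x-S_n)$ is nonnegative and nonincreasing, hence it converges to some $L\ge 0$, and the goal is $L=0$. Suppose $L>0$. Since $p_n\to 0$, choose $N$ with $p_n<L$ for all $n>N$; then for $n>N$ we have $S_{n-1}+p_n<S_{n-1}+L\le x$, so \eqref{ga} forces $c_n=1$. Consequently $S_n=S_N+\sum_{i=N+1}^n p_i$ stays bounded above by $x$, which forces $\sum_{i=N+1}^\infty p_i<\infty$, hence $\sum_{i=n+1}^\infty p_i\to 0$; combined with the tail bound above this yields $x-S_n\to 0$, contradicting $L>0$. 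Therefore $L=0$, i.e. $x=\lim_n S_n=\sum_{i=1}^\infty c_ip_i$, so $(c_i)$ is indeed an expansion of $x$.

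Finally, for lexicographic maximality I would argue by contradiction. Suppose $(c_i')$ is an expansion of $x$ with $(c_i')>_{lex}(c_i)$, and let $N$ be the first index at which they differ; since the digits are binary, $c_i'=c_i$ for $i<N$ while $c_N'=1>0=c_N$. Because $c_N=0$, rule \eqref{ga} gives $p_N+S_{N-1}>x$, where $S_{N-1}=\sum_{i=1}^{N-1}c_ip_i=\sum_{i=1}^{N-1}c_i'p_i$. On the other hand, using $p_i>0$ for the tail,
$$x=\sum_{i=1}^\infty c_i'p_i\ge\sum_{i=1}^{N-1}c_i'p_i+p_N=S_{N-1}+p_N>x,$$
a contradiction. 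Hence no expansion of $x$ exceeds $(c_i)$ lexicographically, which proves the claim.

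I expect the only genuine subtlety to be the convergence step $x=\sum_{i=1}^\infty c_ip_i$: the monotone error $x-S_n$ does not a priori tend to $0$, and one must combine \eqref{kdef} (to obtain the tail bound) with $p_n\to 0$ (to exclude the degenerate scenario in which $c_n$ is eventually $1$ while $\sum p_i$ diverges). The two inductions and the lexicographic comparison are routine.
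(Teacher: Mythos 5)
Your proposal is correct and follows essentially the same route as the paper: the same inductive invariant $0\le x-\sum_{i=1}^n c_ip_i\le\sum_{i=n+1}^\infty p_i$ (using the Kakeya inequality exactly where you do, in the $c_n=0$ case), and the same lexicographic contradiction via $x\ge\sum_{i=1}^{N-1}c_i'p_i+p_N>x$. The only difference is cosmetic: the paper concludes $x-\sum_{i=1}^n c_ip_i\to 0$ directly from the fact that the tail $\sum_{i=n+1}^\infty p_i$ of the (implicitly convergent) series vanishes, whereas you give a slightly more careful by-contradiction argument that does not presuppose convergence of $\sum p_i$.
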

\begin{proof}
To prove that $(c_i)$ is an expansion of $x$ we define the sequence $r_n:=x-\sum_{i=1}^n c_ip_i$. We inductively prove that $r_n\in[0,S_n]$ for all $n\geq 0$, where $S_n:=\sum_{i=n+1}^\infty p_i$.  The base of the induction is provided by  $r_0=x\in[0,S_0]$. Assume now $n>0$ and $r_{n-1}\in[0,S_{n-1}]$. If $c_n=0$ then by \eqref{ga} $x<p_{n}+\sum_{i=1}^{n-1}c_ip_i$. Then 
$r_n=x-\sum_{i=1}^{n-1}c_ip_i<p_{n}\leq \sum_{i=n+1}^\infty p_i=S_n$, because by assumption $(p_i)$ is a Kakeya sequence. On the other hand, $r_{n}=r_{n-1}\geq0$ and we deduce $r_n\in[0,S_n]$. If otherwise $c_n=1$ then, by \eqref{ga}, $r_{n}=r_{n-1}-p_n\geq 0$. On the other hand, by inductive hypothesis $r_{n}=r_{n-1}-p_n\leq S_{n-1}-p_n=S_n$ and this completes the proof of the claimed inclusion $r_n\in[0,S_n]$ for all $n$. In particular we have $r_n\to 0$ as $n\to \infty$ and, consequently, $x=\sum_{i=1}^n c_ip_i+r_n=\sum_{i=1}^\infty c_ip_i$.

It is left to check that the above is the lexicographic greatest expansion of $x$. In order to seek a contradiction assume that $x=\sum_{i=1}^\infty c'_ip_i$ and that $(c_i)<_{lex} (c'_i)$. Then there exists $N\geq 1$ such that $c_i=c'_i$ for all $1\leq i<N$  and that $c_N<c_N'$. In particular, $c_N=0$ and $c_N'=1$. By \eqref{ga} we have $$x<p_N+\sum_{i=1}^{N-1}c_ip_i=p_N+\sum_{i=1}^{N-1}c'_ip_i=\sum_{i=1}^{N}c'_ip_i=x-\sum_{i=N+1}^\infty c'_ip_i\leq x.$$
yielding the required contradiction. 
\end{proof}

 Greedy and lazy expansions can be characterized by using the above result. 
\begin{proposition}[Greedy and lazy expansions]\label{p2}
Let $(p_i)$ be a Kakeya sequence,  $x\in[0,\sum_{i=1}^\infty p_i]$ and let $(c_i)$ be an expansion of $x$. 
\begin{enumerate}
    \item[(i)] If for all $n\geq 1$ such that $c_n=0$ we have $\sum_{i=n+1}^\infty c_{i}p_i<p_n$ then $(c_i)$ is the greedy expansion of $x$.
    \item[(ii)] If for all $n\geq 1$ such that $c_n=1$ we have $\sum_{i=n+1}^\infty c_{i}p_i>\sum_{i=n+1}^\infty p_i-p_n$ then $(c_i)$ is the lazy expansion of $x$.
\end{enumerate}

\end{proposition}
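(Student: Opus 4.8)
The plan is to prove each characterization by contradiction, mirroring the lexicographic argument used at the end of the proof of Proposition \ref{p1}. For part (i), suppose $(c_i)$ satisfies the stated condition but is \emph{not} the greedy expansion. By Proposition \ref{p1} the greedy expansion $(c_i^*)$ is lexicographically greatest, so $(c_i)<_{lex}(c_i^*)$, and hence there is a smallest index $N$ with $c_i=c_i^*$ for $i<N$ and $c_N<c_N^*$, forcing $c_N=0$ and $c_N^*=1$. The first step is to extract a contradiction from this: since both sequences expand $x$ and agree before $N$, we get $\sum_{i=N+1}^\infty c_ip_i = p_N + \sum_{i=N+1}^\infty c_i^*p_i \ge p_N$. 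But $c_N=0$, so the hypothesis of (i) gives $\sum_{i=N+1}^\infty c_ip_i<p_N$, a contradiction. Part (ii) is the mirror image: assuming $(c_i)$ is not lazy, Proposition \ref{p1} (applied to lexicographic order, or its obvious lazy analogue) yields an index $N$ with $c_N=1>0=c_N^*$ for the lazy expansion $(c_i^*)$; subtracting the two expansions below $N$ and using $\sum_{i=1}^\infty p_i=\sum_{i=1}^{N}p_i + S_N$ rearranges into $\sum_{i=N+1}^\infty c_ip_i \le S_N - p_N$, contradicting the hypothesis $\sum_{i=N+1}^\infty c_ip_i > S_N - p_N$ at $n=N$.

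An alternative, perhaps cleaner, route for (i) is direct rather than by contradiction: show that the condition forces $(c_i)$ to coincide with the sequence produced by the greedy algorithm \eqref{ga}. Proceed by induction on $n$, assuming $c_i$ agrees with the algorithm's output $c_i^{\mathrm{gr}}$ for $i<n$. Writing $r_{n-1}=x-\sum_{i=1}^{n-1}c_ip_i=\sum_{i=n}^\infty c_ip_i$, one checks: if $c_n=1$ then $r_{n-1}\ge p_n$, so the algorithm also sets $c_n^{\mathrm{gr}}=1$; if $c_n=0$ then by hypothesis $r_{n-1}=\sum_{i=n+1}^\infty c_ip_i<p_n$, so the algorithm sets $c_n^{\mathrm{gr}}=0$. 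Either way $c_n=c_n^{\mathrm{gr}}$, and part (ii) is handled symmetrically using the lazy algorithm (or by applying (i) to the complementary sequence $(1-c_i)$, which expands $\sum_{i=1}^\infty p_i - x$, after verifying the lazy–greedy duality). I would likely present the contradiction version since it reuses the machinery already on the page and keeps both parts parallel.

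The main obstacle is essentially bookkeeping rather than conceptual: one must be careful that the tail sums $\sum_{i=N+1}^\infty c_ip_i$ and $\sum_{i=N+1}^\infty c_i^*p_i$ converge (guaranteed since $(c_i),(c_i^*)\in\{0,1\}^\NN$ and $\sum p_i<\infty$) and that the rearrangement $\sum_{i=1}^\infty c_ip_i=\sum_{i=1}^\infty c_i^*p_i$ may legitimately be split at index $N$ and the common initial block cancelled. For (ii) the only subtlety is getting the inequality direction right when passing from ``$(c_i)$ not lazy'' to the existence of $N$ with $c_N=1$, $c_N^*=0$; this is exactly the lexicographic statement of Proposition \ref{p1} read for the smallest expansion, so it requires either citing the obvious lazy counterpart of Proposition \ref{p1} or a one-line remark that reversing all inequalities in its proof yields it.
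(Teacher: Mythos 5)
Your proposal is correct and follows essentially the same route as the paper: for (i) the paper likewise compares $(c_i)$ with a hypothetically lexicographically larger expansion at the first differing index $N$ (forcing $c_N=0$, $c_N'=1$) and derives the contradiction $\sum_{i=N+1}^\infty c_ip_i\geq p_N$ against the hypothesis. The only cosmetic difference is in (ii): the paper passes to the reflected sequence $(1-c_i)$ and reduces to (i) for $\bar x=\sum_{i=1}^\infty p_i-x$, whereas your primary route runs the mirror-image lexicographic argument directly (bounding $\sum_{i=N+1}^\infty c_i'p_i$ by $\sum_{i=N+1}^\infty p_i$); both work, and you mention the reflection variant yourself, so there is nothing substantive to add.
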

\begin{proof}
Let $(c_i)$ be an expansion of $x$. Assume that condition (i) is satisfied. If $(c_i)$ is unique then it is trivially the greedy expansion of $x$. If otherwise there exists another expansion $(c_i')$ of $x$ then there is a smallest $n$ such that $c_n\not=c'_n$. In particular, if $n> 1$ then $c_i=c_i'$ for $i=1,\dots,n-1$. In order to seek a contradiction, assume that $(c'_i)$ is lexicographically greater than $(c_i)$. Therefore $c_n=0$ and $c_n'=1$. But we then have by condition (i) 
$$0\leq \sum_{i=n+1}^\infty c'_ip_i= x-\sum_{i=i}^{n-1}c'_ip_i-p_n=x-\sum_{i=i}^{n-1}c_ip_i-p_n=\sum_{i=n+1}^\infty c_ip_i-p_n<0$$
and, consequently, the required contradiction. 

Consider now the reflected sequence $(\bar c_i)$ defined by $\bar c_n:=1-c_n$ for all $n\geq 1$. Assume  condition (ii) and  note that this implies if $c_n=1$ then $(\bar c_i)=0$ and $\sum_{i=n+1}^\infty \bar c_ip_i<p_n$. In other words, $(\bar c_i)$ satisies condition (i) and, in view of the above arguments, is the greedy expansion of $\bar x:=\sum_{i=1}^\infty p_i-x$. Suppose now that $(c'_i)$ is a lexicographically smaller expansion of $x$. Then $(\bar c'_i)$ is both an expansion of $\bar x$ and it is lexicographically greater than $(\bar c_i)$, which is a contradiction with the fact that $(\bar c_i)$ is greedy.
  \end{proof}
  \subsection{Unique expansions of Kakeya sequences}\label{s21}
We present some sufficient and some necessary conditions for the existence of unique expansions with respect to a strictly decreasing Kakeya sequence.

Adapting classical arguments for $q$-expansions, we have the following criterion for the uniqueness. 

\begin{proposition}[Unique expansions]\label{p2u}
Let $(p_i)$ be a Kakeya sequence, $(c_i)\in\{0,1\}^\infty$ and  $x\in[0,\sum_{i=1}^\infty p_i]$ defined by $x:=\sum_{i=1}^\infty c_ip_i$. Then $(c_i)$ is the unique expansion of $x$ if and only if
\begin{enumerate}
    \item[(i)] For all $n\geq 1$ such that $c_n=0$ we have $\sum_{i=n+1}^\infty c_{i}p_i<p_n$.
    \item[(ii)] For all $n\geq 1$ such that $c_n=1$ we have $\sum_{i=n+1}^\infty c_{i}p_i>\sum_{i=n+1}^\infty p_i-p_n$.
\end{enumerate}
Moreover if $(c_i)\not=(0)^\infty,(1)^\infty$ and if $(c_i)$ is unique, then it is infinite, that is it cannot end with either $(0)^\infty$ or $(1)^\infty$.
\end{proposition}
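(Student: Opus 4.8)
The plan is to prove the characterization of uniqueness by an equivalence with the greedy and lazy expansions, and then to argue the ``infinite'' addendum separately. First I would observe that conditions (i) and (ii) are exactly the hypotheses of Proposition \ref{p2}(i) and (ii), so if both hold then $(c_i)$ is simultaneously the greedy \emph{and} the lazy expansion of $x$; since every expansion of $x$ is lexicographically squeezed between the lazy and the greedy one, $(c_i)$ must be the unique expansion. This gives the ``if'' direction essentially for free from the results already proved in the excerpt.

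For the converse, I would prove the contrapositive: if (i) fails then I construct a second expansion, and likewise if (ii) fails. Suppose (i) fails, so there is some $n$ with $c_n=0$ and $\sum_{i=n+1}^\infty c_i p_i \ge p_n$. The idea is to ``carry'': set $c_n'=1$ and then re-expand the remainder $\sum_{i=n+1}^\infty c_i p_i - p_n \in [0, S_n]$ (where $S_n:=\sum_{i=n+1}^\infty p_i$, as in the proof of Proposition \ref{p1}) by the greedy algorithm of Proposition \ref{p1}, keeping $c_i'=c_i$ for $i<n$. The resulting sequence $(c_i')$ is an expansion of $x$ differing from $(c_i)$ at position $n$, contradicting uniqueness. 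The case where (ii) fails is handled by the reflection trick already used in the proof of Proposition \ref{p2}: pass to $\bar c_i := 1-c_i$, which is then an expansion of $\bar x := \sum_i p_i - x$ violating the analogue of (i), produce a competing expansion of $\bar x$, and reflect back to get a competing expansion of $x$.

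For the final assertion, suppose $(c_i)$ is unique and ends with $(0)^\infty$, say $c_i=0$ for all $i>N$ with $c_N=1$ (such an $N$ exists since $(c_i)\ne(0)^\infty$). Then at index $N$ we have $c_N=1$ but $\sum_{i=N+1}^\infty c_i p_i = 0$, so condition (ii) reads $0 > S_N - p_N = \sum_{i=N+1}^\infty p_i - p_N$, i.e. $p_N > \sum_{i=N+1}^\infty p_i$, contradicting the Kakeya inequality \eqref{kdef}. Symmetrically, if $(c_i)$ ends with $(1)^\infty$, pick $N$ with $c_N=0$ and $c_i=1$ for all $i>N$ (exists since $(c_i)\ne(1)^\infty$); then condition (i) at index $N$ demands $S_N = \sum_{i=N+1}^\infty p_i < p_N$, again violating \eqref{kdef}. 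Hence a unique expansion other than $(0)^\infty$ and $(1)^\infty$ cannot be eventually constant.

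The main obstacle I anticipate is purely bookkeeping: making the ``carry'' construction in the converse direction watertight, in particular checking that after setting $c_n'=1$ the tail remainder genuinely lies in $[0,S_n]$ so that Proposition \ref{p1} applies — the lower bound is the failure hypothesis itself, and the upper bound $\sum_{i=n+1}^\infty c_i p_i - p_n \le S_n - p_n < S_n$ is immediate, so this is routine. One should also note that the new expansion really differs from the old one (they disagree at position $n$), which is automatic. Everything else is a direct appeal to Propositions \ref{p1} and \ref{p2} and the Kakeya condition \eqref{kdef}.
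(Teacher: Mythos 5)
Your proof is correct, and its overall architecture (reduce uniqueness to being simultaneously greedy and lazy, then handle the eventually-constant tails via the Kakeya inequality) matches the paper's. The one place where you genuinely diverge is the ``only if'' direction of the equivalence. The paper disposes of it in one line by citing Proposition \ref{p2}; but Proposition \ref{p2} as stated only gives the implications (i) $\Rightarrow$ greedy and (ii) $\Rightarrow$ lazy, so the converse implications needed for ``unique $\Rightarrow$ (i) and (ii)'' are left implicit (they can be read off from the greedy algorithm \eqref{ga}: if $(c_i)$ is greedy and $c_n=0$ then $x<p_n+\sum_{i=1}^{n-1}c_ip_i$, which is exactly (i), and dually for lazy). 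You instead prove the contrapositive by an explicit carry construction: flip $c_n$ from $0$ to $1$ and re-expand the remainder $\sum_{i=n+1}^\infty c_ip_i-p_n\in[0,S_n]$ greedily over the shifted sequence $(p_{n+i})_{i\geq 1}$ --- which is again a Kakeya sequence since \eqref{kdef} holds at every index, a point worth stating explicitly when you invoke Proposition \ref{p1} --- then handle a failure of (ii) by reflection. This is self-contained and arguably fills the small gap in the paper's citation, at the cost of a slightly longer argument. For the final assertion the two proofs are essentially identical; you treat both tails $(0)^\infty$ and $(1)^\infty$ directly through conditions (ii) and (i) respectively, while the paper treats the $(1)^\infty$ tail and obtains the other by reflection.
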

\begin{proof}
The sequence $(c_i)$ is unique if and only if it is simultaneously the greedy and the lazy expansion of $x$. Therefore the first part of the claim follows by Proposition \ref{p2}. Now, assume that $(c_i)$ is unique and, in order to seek a contradiction, that there is a smallest $N$ satisfying $(c_{N+i})=(1)^\infty$. Since $(c_i)\not=(1)^\infty$ then $c_N=0$. By above arguments $\sum_{i=N+1}^\infty c_ip_i=\sum_{i=N+1}^\infty p_i< p_N$.
But by assumption $(p_i)$ is a Kakeya sequence, therefore $\sum_{i=N+1}^\infty p_i\geq p_N$, implying the required contradiction. Then any unique expansion cannot end with $(1)^\infty$. Now, if $(c_i)$ is unique also its reflection $(\bar c_i)=(1-c_i)$ is unique, we can conclude that the tail of $(c_i)$ is also different from $(0)^\infty$.
\end{proof}

We present our main result on the uniqueness of expansions of Kakeya sequences, while postponing its proof to Section \ref{s42} below.

\begin{theorem}\label{thm2}
Let $(p_i)$ be a strictly decreasing Kakeya sequence. 

A sufficient condition for the existence of unique expansions with respect to $(p_i)$ is the following.
There exists an integer $N\geq 1$ such that 
\begin{equation}\label{cnu1}\sum_{i=0}^\infty p_{n+2i} < p_{n-1} \qquad \text{for all $n\geq. N$}\end{equation}
In particular, $0^N(10)^\infty$ is a unique expansion with respect to $(p_i)$.

If moreover $p_{n}\leq 2 p_{n+1}$ for all sufficiently large $n$,  necessary conditions for the existence of unique expansions with respect to $(p_i)$ are the following: 
\begin{equation}\label{csu1}\sum_{i=n+1}^\infty p_i \leq p_{n-1} \qquad \text{for all sufficiently large $n$}\end{equation}
and
\begin{equation}\label{csu2}\sum_{i=n+1}^\infty p_i < p_{n-1} \qquad \text{for infinitely many $n$.}\end{equation}

\end{theorem}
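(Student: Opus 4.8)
My plan is to derive both halves of the statement from the uniqueness criterion of Proposition~\ref{p2u}; throughout I write $S_n:=\sum_{i=n+1}^\infty p_i$, so that the Kakeya condition reads $p_n\le S_n$ and $S_{n-1}=p_n+S_n$.

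\emph{Sufficient condition.} Here I would simply verify that $(c_i):=0^N(10)^\infty$ satisfies conditions (i) and (ii) of Proposition~\ref{p2u}, so that it is a (manifestly non-trivial) unique expansion of $x:=\sum_i c_ip_i$, which converges because \eqref{cnu1} forces $\sum_{i=0}^\infty p_{(N+1)+2i}<p_N<\infty$. After a short parity bookkeeping, for an index $n$ with $c_n=0$ the tail $\sum_{i>n}c_ip_i$ equals $\sum_{i=0}^\infty p_{(N+1)+2i}$ when $n\le N$ and $\sum_{i=0}^\infty p_{(n+1)+2i}$ when $n>N$, while for an index $n$ with $c_n=1$ the complementary tail $\sum_{i>n,\,c_i=0}p_i$ equals $\sum_{i=0}^\infty p_{(n+1)+2i}$. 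In each case \eqref{cnu1}, applied at the index $N+1$ or $n+1$, bounds the relevant sum by $p_N\le p_n$ or by $p_n$, strict monotonicity of $(p_i)$ handling the case $n\le N$. This part is entirely routine.

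\emph{Necessary conditions.} Fix a unique expansion $(c_i)\ne 0^\infty,1^\infty$ of some $x$; by Proposition~\ref{p2u} it is infinite, hence not eventually constant, so $\NN$ is the disjoint union of infinitely many maximal runs of $(c_i)$, alternately of $0$'s and of $1$'s. Let $N_0$ be such that $p_j\le 2p_{j+1}$ for $j\ge N_0$, and let $M_0$ be the right endpoint of the (finite!) run containing the index $N_0$. I claim that $S_n<p_{n-1}$ for every $n>M_0$, which proves \eqref{csu1} and \eqref{csu2} simultaneously — indeed with a strict inequality for \emph{all} large $n$. Since the reflection $(c_i)\mapsto(1-c_i)$ preserves ``unique non-trivial expansion'' and changes neither $S_n$ nor $p_{n-1}$, I may assume the run $[a,b]\ni n$ consists of $0$'s; as $n>M_0$ we get $a>M_0\ge N_0$, hence $a-1\ge N_0\ge 1$ and $c_{a-1}=1$. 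Condition (ii) of Proposition~\ref{p2u} at $a-1$ gives $\sum_{i\ge a,\,c_i=0}p_i<p_{a-1}$; peeling off the block $p_a+\dots+p_n$, all of whose digits are $0$, yields $\sum_{i>n,\,c_i=0}p_i<p_{a-1}-(p_a+\dots+p_n)$, while condition (i) at $n$ gives $\sum_{i>n,\,c_i=1}p_i<p_n$. Adding the two and simplifying, $S_n<p_{a-1}-(p_a+\dots+p_{n-1})$, and it then suffices to observe that $p_{a-1}-p_{n-1}=\sum_{j=a-1}^{n-2}(p_j-p_{j+1})\le\sum_{j=a-1}^{n-2}p_{j+1}=p_a+\dots+p_{n-1}$, each step using $p_j\le 2p_{j+1}$ (licit as $j\ge a-1\ge N_0$), i.e. $p_{a-1}-(p_a+\dots+p_{n-1})\le p_{n-1}$. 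Hence $S_n<p_{n-1}$.

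The one genuine idea — and the point where the hypothesis $p_n\le 2p_{n+1}$ enters — is in producing this estimate. The naive route, bounding $S_n$ by the value $S_m<p_{m-1}$ at the nearest switch $m$ and then adding back the intervening $p_i$'s, always leaves an extra additive term of order $p_n$ which cannot be absorbed; the fix is to apply condition (ii) one step \emph{before} the whole $0$-run, at $a-1$, since this at once controls the run and all later $0$'s, after which the telescoping inequality $p_{a-1}-p_{n-1}\le p_a+\dots+p_{n-1}$ — which is exactly what $p_j\le 2p_{j+1}$ buys — closes the gap with no slack to spare. Everything else is bookkeeping: finiteness of $M_0$ (because $(c_i)$ is not eventually constant), the legitimacy of the reflection reduction (it preserves both uniqueness and non-triviality, cf. the proof of Proposition~\ref{p2u}), and the degenerate case $n=a$ in which the peeled block and the telescoping sum are empty and the conclusion reads $S_a<p_{a-1}$.
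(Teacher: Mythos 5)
Your proof is correct. The sufficiency half coincides with the paper's argument: a direct verification of conditions (i) and (ii) of Proposition \ref{p2u} for the sequence $0^N(10)^\infty$, splitting according to the position and parity of $n$. The necessity half, however, takes a genuinely different and in fact sharper route. The paper proves \eqref{csu1} by contradiction, invoking the external continuum result of \cite{BKL21}: if $\sum_{i=n+1}^\infty p_i>p_{n-1}$ infinitely often, the shifted sequence $(p_{N+i})$ gives every tail a continuum of expansions, so no expansion can be unique; it then obtains \eqref{csu2} separately by reading off the two conditions of Proposition \ref{p2u} only at switch indices with $c_nc_{n+1}=01$, which is why it only reaches the strict inequality for infinitely many $n$. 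You instead apply condition (ii) at the index $a-1$ immediately preceding the entire maximal $0$-run containing $n$, condition (i) at $n$ itself, and then the telescoping estimate $p_{a-1}-p_{n-1}\le p_a+\dots+p_{n-1}$ --- which is precisely what the hypothesis $p_j\le 2p_{j+1}$ buys --- to propagate the bound across the whole run. I checked the bookkeeping ($a>M_0\ge N_0$ so that $c_{a-1}=1$ is defined and the doubling inequality applies from $j=a-1$ on; the reflection step, which leaves the run structure, $S_n$ and $p_{n-1}$ unchanged; the degenerate case $n=a$) and it is sound. Your argument is self-contained, avoiding the appeal to \cite{BKL21}, and it yields the strictly stronger necessary condition $\sum_{i=n+1}^\infty p_i<p_{n-1}$ for all sufficiently large $n$, which subsumes both \eqref{csu1} and \eqref{csu2} and incidentally narrows the gap with the sufficient condition \eqref{cnu1} discussed in Section \ref{s41}; the authors may wish to record this strengthening.
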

The next result proves the Golden Mean $\varphi$ to be a threshold for the existence of unique expansions not only for the $q$-expansions, but also for general  expansions of Kakeya sequences. In particular if the ratio $p_n/p_{n+1}$ between two consecutive terms of the Kakeya sequence is definitively greater than $\varphi$ then there exist unique expansions, if otherwise the ratio $p_n/p_{n+1}$ is definitively smaller or equal than $\varphi$ then there are not unique expansions with respect to $(p_i)$.
\begin{corollary}\label{cor1}
Let $(p_i)$ be a strictly decreasing Kakeya sequence.
\begin{enumerate}
    \item If there exists $N$ such that $p_n\geq \varphi p_{n+1}$ for all  $n\geq N$ and $p_n> \varphi p_{n+1}$ for infinitely many $n$,  then $0^N(10)^\infty$ is a unique expansion with respect to $(p_i)$.
\item If there exists $N$ such that $p_n\leq \varphi p_{n+1}$ for all  $n\geq N$ then  every $x\in(0,\sum_{i=1}^\infty p_i)$ admits at least two expansions with respect to $(p_i)$.
\end{enumerate}
\end{corollary}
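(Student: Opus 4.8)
The plan is to deduce both parts from Theorem \ref{thm2}, the entire argument being powered by the two elementary identities $\sum_{i=0}^{\infty}\varphi^{-2i}=\varphi$ and $\sum_{k=1}^{\infty}\varphi^{-k}=\varphi$, both immediate consequences of $\varphi^{2}=\varphi+1$.

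For part (1) I would first translate the hypothesis into the sufficient condition \eqref{cnu1}. Since $p_{m}\ge\varphi p_{m+1}$ for every $m\ge N$, iterating the inequality yields $p_{n+2i}\le\varphi^{-2i}p_{n}$ for all $i\ge 0$ and all $n\ge N$, whence
\[
\sum_{i=0}^{\infty}p_{n+2i}\ \le\ p_{n}\sum_{i=0}^{\infty}\varphi^{-2i}\ =\ \varphi\,p_{n}\ \le\ p_{n-1}\qquad(n\ge N+1),
\]
the last step using $p_{n-1}\ge\varphi p_{n}$. To upgrade this to a strict inequality I would invoke the hypothesis that $p_{m}>\varphi p_{m+1}$ for infinitely many $m$: given $n\ge N+1$ there is such an $m\ge n$, so for all $i$ with $n+2i>m$ the bound $p_{n+2i}\le\varphi^{-2i}p_{n}$ is strict, and therefore $\sum_{i=0}^{\infty}p_{n+2i}<\varphi p_{n}\le p_{n-1}$. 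Thus \eqref{cnu1} holds for every $n\ge N+1$, and by Proposition \ref{p2u} this is precisely the set of inequalities needed for $0^{N}(10)^{\infty}$ to be a unique expansion (equivalently, it is the hypothesis of the sufficient part of Theorem \ref{thm2}).

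For part (2) I would show that the hypothesis forces the necessary condition \eqref{csu2} to fail. Here $p_{m}\le\varphi p_{m+1}$ for $m\ge N$ gives, upon iteration, $p_{n+k}\ge\varphi^{-k}p_{n}$ for all $k\ge 1$ and $n\ge N$, so that
\[
\sum_{i=n+1}^{\infty}p_{i}\ =\ \sum_{k=1}^{\infty}p_{n+k}\ \ge\ p_{n}\sum_{k=1}^{\infty}\varphi^{-k}\ =\ \varphi\,p_{n}\ \ge\ p_{n-1}\qquad(n\ge N+1),
\]
using $p_{n-1}\le\varphi p_{n}$ at the end. Hence $\sum_{i=n+1}^{\infty}p_{i}\ge p_{n-1}$ for all large $n$, i.e. \eqref{csu2} is violated. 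Since $\varphi<2$, the hypothesis also gives $p_{n}\le\varphi p_{n+1}\le 2p_{n+1}$ for $n\ge N$, so the necessary-condition part of Theorem \ref{thm2} applies and rules out the existence of any nontrivial unique expansion with respect to $(p_{i})$. As every $x\in[0,\sum_{i=1}^{\infty}p_{i}]$ admits at least one expansion (Proposition \ref{p1}, or Kakeya's theorem), it follows that every $x\in(0,\sum_{i=1}^{\infty}p_{i})$ admits at least two.

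The step I expect to require the most care is the bookkeeping in part (1): the hypothesis only controls the ratios $p_{m}/p_{m+1}$ from index $N$ on, so \eqref{cnu1} is obtained for $n\ge N+1$ rather than $n\ge N$, and one must check via Proposition \ref{p2u} that these inequalities already suffice for the uniqueness of $0^{N}(10)^{\infty}$ (and not merely of $0^{N+1}(10)^{\infty}$), together with the careful passage from the non-strict geometric bound to a strict one using the "infinitely many strict inequalities" assumption. The geometric estimates themselves, and the verification in part (2), are routine.
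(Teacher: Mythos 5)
Your proof is correct and follows essentially the same route as the paper: both parts reduce to Theorem \ref{thm2} via the geometric-series identities for $\varphi$, establishing \eqref{cnu1} for $n\geq N+1$ in part (1) and the failure of \eqref{csu2} (together with $p_n\leq\varphi p_{n+1}<2p_{n+1}$) in part (2). Your extra care about the off-by-one issue (the hypothesis only yields \eqref{cnu1} from $n=N+1$ on, while $0^N(10)^\infty$ is the claimed unique expansion) is warranted --- the paper glosses over it --- and your resolution is right, since checking the conditions of Proposition \ref{p2u} for $0^N(10)^\infty$ only ever invokes \eqref{cnu1} at indices $n\geq N+1$.
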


The proof of Corollary \ref{cor1} can be found in Section \ref{s3proofs}. Note that it has been proved in \cite[Corollary 4.2]{BKL21}, that the  condition $p_n<\varphi p_{n+1}$  for all $n\geq 1$ (which is stronger than condition (2) in the above result), implies that every $x\in(0,\sum_{i=1}^\infty p_i)$ has a continuum of different expansions. 

Here we present some examples of application of the above results.

\begin{example}[$q$-expansions]
If $p_n=q^n$ with $q\in(1,2)$ then condition \eqref{cnu1} is equivalent to $q>\varphi$, where $\varphi$ is the Golden ratio. We then recover the classical result by \cite{EJK90}. Note that in this case \eqref{cnu1} and \eqref{csu2} are equivalent and we have that $q>\varphi$ is a sufficient and necessary condition for the existence of unique expansions. Also, in agreement with the classical literature $(10)^\infty$ is the minimal unique expansion. 
\end{example}

\begin{example}[Perturbations of $q$-expansions]
Let $(p_i)$ be a stricty decreasing  sequence satisfying 
$p_n=\varphi^{-n}(1+\varepsilon_n)$ for some sequence $(\varepsilon_i)$ such that $\varepsilon_n>-1$ for all $n$ and
\begin{equation}\label{perturbation}\frac{1+\inf_i \varepsilon_i}{1+\sup_i \varepsilon_i}\geq \varphi-1.\end{equation}
Then by \cite[Proposition 3.2]{BKL21}, $(p_i)$ is a Kakeya sequence. If 
$(\varepsilon_i)$ is eventually decreasing and $\varepsilon_n>\varepsilon_{n+1}$ for infinitely many $n$, then $\varphi p_{n+1}=\varphi^{-n}(1+\varepsilon_{n+1})\leq \varphi^{-n}(1+\varepsilon_{n})=p_n$ and the inequality is strict for infinitely many $n$. By Corollary \ref{cor1} we obtain that $0^N(10)^\infty$ is unique for a sufficiently large $N$. If otherwise $\varepsilon_n$ is eventually increasing then $\varphi p_{n+1}\geq p_n$ and, again by Corollary \ref{cor1} there are not unique expansions with respect to $(p_i)$. 
\end{example}

\begin{example}[Fibonacci and Tribonacci expansions]
We know from \cite[Theorem 1.5]{BKL21} that for all $x\in(0,\sum_{i=1}^\infty F^{-1}_i)$ there exists a continuum of different Fibonacci expansions. Consider now the Tribonacci sequence
$$T_{1}:=1,\quad T_{2}:=1,\quad  T_3:=2, \quad T_{n+3}:=T_n+T_{n+1}+T_{n+2}.$$
and define  $(p_i):=(T_{i+1}^{-1})$. Note that $(p_i)$ is a strictly decreasing Kakeya sequence. Indeed $(p_i)$ is clearly positive, vanishing and a simple inductive argument proves that $T_{n+1}\leq 2T_{n}$ for all $n\geq 1$. Therefore we have $p_{n}\leq 2^{-i}p_{n+i}$, for all $n,i\geq 1$. From this, in the fashion of \cite[Lemma 3.1]{BKL21}, we conclude
$$\sum_{i=n+1}^\infty p_{i}=\sum_{i=1}^\infty p_{n+i}\geq \sum_{i=1}^\infty 2^{-i}p_{n}=p_n.$$
As it is well known \cite{fibtrib}, $T_{n+1}/T_{n}\to q$ where $q=1,83928675\dots$ is the positive root of $x^3-x^2-x-1$ -- incidentally note that $q$ is a Pisot number and it belongs to the set of exceptional values $P$ for which every greedy $q$-expansion is also optimal \cite{DDKL12}, see Example \ref{ex1} below. We then have that $p_{n}/p_{n+1}\to q>\varphi$ and, consequently $p_n>\varphi p_{n+1}$ for all sufficiently large $n$. By Corollary \ref{cor1}, there exist unique Tribonacci expansions. 
\end{example}
\subsection{Other results on unique expansions}\label{s41}
By Theorem \ref{thm2}, if $\sum_{i=n+1}^\infty p_i < p_{n-1} $ for infinitely many $n$ and 
    $\sum_{i=n+1}^\infty p_i \leq p_{n-1}$ for all sufficiently large $n$ then there can be unique expansions. If the stronger condition \eqref{cnu1} is satisfied then a unique expansion is provided by $0^N(10)^\infty$ with $N$ sufficiently large. We note that the sufficient condition \eqref{cnu1} for the existence of a unique expansion directly implies
\begin{equation}\label{cnu2}\sum_{i=n+1}^\infty p_i < p_{n-1} \qquad \text{for all sufficiently large $n$.}\end{equation}
    Indeed \eqref{cnu1} implies for $n$ large enough
    $$\sum_{i=0}^\infty p_{n+2i} < p_{n-1} \quad \text{and} \quad
   \sum_{i=0}^\infty p_{n+2i+1} < p_{n}.$$
    Summing above inequalities we deduce $\sum_{i=n}^\infty p_i<p_{n-1}+p_n$ and this implies $\sum_{i=n+1}^\infty p_i<p_{n-1}$.
However there may be Kakeya sequences, not covered by the assumptions of Theorem \ref{thm2}, such that
\begin{enumerate}
    \item [1)] $\sum_{i=n+1}^\infty p_i \leq p_{n-1}$ for all sufficiently large $n$,
    \item [2)]$\sum_{i=n+1}^\infty p_i < p_{n-1} $ for infinitely many $n$, 
\end{enumerate}
   but failing to satisfy \eqref{cnu1}, because $\sum_{i=0}^\infty p_{n+2i}\geq p_{n-1}$ for infinitely many $n$. 
    
    A particular case is discussed in the following result.
    
    \begin{proposition}\label{alternatedp}
    Let $(p_i)$ be a Kakeya sequence such that {for all sufficiently large $n$}
    \begin{align}\label{alternated}&p_{2n}\geq \sum_{i=0}^\infty p_{2n+2i+1}, \qquad p_{2n+1}\leq \sum_{i=1}^\infty p_{2n+2i}.\end{align}
    
    Then every $x\in(0,\sum_{i=1}^\infty p_i)$ has at least two expansions with respect to $(p_i)$. 
    \end{proposition}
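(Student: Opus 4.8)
The plan is to prove the equivalent statement that no $x\in(0,\sum_{i=1}^\infty p_i)$ admits a \emph{unique} expansion; since the greedy algorithm (Proposition~\ref{p1}) always produces at least one expansion, this is exactly the assertion. So suppose, for contradiction, that $(c_i)$ is the unique expansion of some such $x$. By Proposition~\ref{p2u} the sequence $(c_i)$ satisfies conditions (i)--(ii) there, and, since $x\neq 0,\sum_i p_i$, it cannot end with $0^\infty$ or $1^\infty$, hence it has infinitely many $0$'s and infinitely many $1$'s. Write $S_n:=\sum_{i>n}p_i$ and $r_n:=\sum_{i>n}c_ip_i\in[0,S_n]$, so that (i) reads ``$c_n=0\Rightarrow r_n<p_n$'' and (ii) reads ``$c_n=1\Rightarrow S_n-r_n<p_n$'', and $r$ is constant on a maximal $0$-block and strictly decreasing on a maximal $1$-block. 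A first observation (a \emph{switching inequality}): if $c_j\neq c_{j+1}$, then substituting $r_{j+1}=r_j-c_{j+1}p_{j+1}$ into (i)--(ii) forces $S_{j+1}<r_j<p_j$ when $c_j=0$, and likewise $S_{j+1}<p_j$ when $c_j=1$; thus $S_{j+1}<p_j$ for infinitely many $j$.

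The main step is to confront this with the alternated hypotheses, and the clean part of it is the lemma: \emph{at large positions, no maximal block can start at an even index and have length $\ge 2$}. For a $0$-block $[a,b]$ with $a$ even and $b\ge a+1$, the preceding digit is $c_{a-1}=1$, so (ii) at $a-1$ gives $\rho:=r_{a-1}>S_{a-1}-p_{a-1}$ while (i) at $b$ gives $\rho<p_b$; hence $S_{a-1}-p_{a-1}<p_b$, i.e. $\sum_{i\ge a,\,i\neq b}p_i<p_{a-1}$. Since $a-1$ is odd, the hypothesis $p_{2n+1}\le\sum_{i\ge1}p_{2n+2i}$ at $a-1$ reads $p_{a-1}\le p_a+p_{a+2}+p_{a+4}+\cdots$; separating $\sum_{i\ge a,\,i\neq b}p_i$ into even- and odd-indexed parts then leaves $p_{a+1}+p_{a+3}+\cdots<p_b$, which is absurd because $b\ge a+1$ forces $p_b\le p_{a+1}<p_{a+1}+p_{a+3}+\cdots$. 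Applying the same computation to the reflected expansion $(1-c_i)$ of $\sum_i p_i-x$ (unique by Proposition~\ref{p2u}, and with $(p_i)$ unchanged) disposes of $1$-blocks as well.

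Hence every maximal block that starts at an even index is a singleton. One then shows, by a more involved variant of the same analysis — using (i)/(ii) over several consecutive positions of a putative long block starting at an \emph{odd} index together with the telescoped identity $p_{2n}-p_{2n+1}-p_{2n+2}=\delta_n-\delta_{n+1}$ (with $\delta_n:=p_{2n}-\sum_{i\ge0}p_{2n+2i+1}\ge0$) coming from the first hypothesis, its companion coming from the second, and the fact that $\delta_n\to0$ — that long blocks at odd starts are impossible at large positions too. Therefore $(c_i)$ is eventually strictly alternating, so its tail equals $(01)^\infty$ or $(10)^\infty$ (in one of the two parity alignments). But then evaluating the tail sum in (i) at an odd $0$-position (resp. in (ii) at an odd $1$-position) gives $\sum_{i\ge1}p_{2n+2i}<p_{2n+1}$ for all large $n$, flatly contradicting the hypothesis $p_{2n+1}\le\sum_{i\ge1}p_{2n+2i}$. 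This contradiction shows that no $x\in(0,\sum_i p_i)$ has a unique expansion, and the proposition follows.

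The main obstacle is the penultimate paragraph, specifically ruling out long blocks at odd starts: for such a block $[a,b]$ the relevant hypothesis at the preceding (even) index $a-1$ is a \emph{lower} bound $p_{a-1}\ge\sum_{i\ge0}p_{(a-1)+2i+1}$, which does not by itself contradict $p_{a-1}>\sum_{i\ge a,\,i\neq b}p_i$; so in place of the one-line comparison used for even starts one must combine the constraints (i)/(ii) arising from several positions of the block and of the adjacent blocks with the telescoping identities, exploiting that the defects of both hypotheses vanish in the limit.
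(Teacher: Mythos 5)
Your reduction to non-uniqueness, the switching inequality, the exclusion of length-$\ge 2$ blocks starting at an even index, and the endgame (an eventually strictly alternating tail contradicts the second hypothesis, via condition (i) of Proposition~\ref{p2u} at an odd $0$-position or condition (ii) at an odd $1$-position) are all coherent — though note that your even-start lemma invokes $p_b\le p_{a+1}$ for $b>a+1$, i.e.\ monotonicity of $(p_i)$, which the proposition does not assume (when $b$ is odd you do not need it, since $p_b$ is itself a summand of $\sum_{j\ge0}p_{a+1+2j}$). The real problem is that the step you yourself flag as ``the main obstacle'', ruling out long blocks that begin at an \emph{odd} index, is never actually carried out: it is described only programmatically (``a more involved variant\dots telescoped identities\dots $\delta_n\to0$''), and it is exactly where the one-line comparison fails, because at the even predecessor $a-1$ the hypothesis supplies a \emph{lower} bound $p_{a-1}\ge\sum_{i\ge0}p_{a+2i}$ pointing the same way as the inequality $\sum_{i\ge a,\,i\ne b}p_i<p_{a-1}$ you extract from (i)--(ii). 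As written this is a genuine gap, and it is not clear that the intermediate target you set yourself — that a unique expansion must be eventually strictly alternating — is even derivable from the stated hypotheses.

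The paper's proof shows that this target is stronger than necessary and sidesteps block-length analysis altogether. It fixes an occurrence of $01$ at an (odd, even) pair of positions $(2N-1,2N)$ (available because a unique expansion is infinite, after reflecting $(c_i)\mapsto(1-c_i)$ if needed), and argues: the tail $(c_{2N+i})$ must be $\le_{\mathrm{lex}}(01)^\infty$, yet it cannot equal $(01)^\infty$, since then $\sum_{i\ge0}p_{2N+2i}\ge p_{2N-1}$ (the second hypothesis) would violate condition (i) at $2N-1$. Consequently a single pattern $c_{2n}c_{2n+1}c_{2n+2}=100$ is forced at an (even, odd, even) triple, and there the first hypothesis, applied at $2n$ and at $2n+2$, bounds the tail sum above by $\sum_{i\ge2n+1}p_i-p_{2n}$, contradicting condition (ii) at $2n$. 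If you want to complete your argument, replacing the global ``eventually alternating'' claim by the search for this one forced local pattern is the missing idea.
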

    \begin{proof}
    In order to seek a contradiction, assume that there exists $x\in(0,\sum_{i=1}^\infty p_i)$ admitting a unique expansion $(c_i)$. By Proposition \ref{p2u} the expansion $(c_i)$ is infinite and, in particular, there exist infinitely many $N$ such that $c_{2N-1}\not=c_{2N}$. Since also $(1-c_i)$ is unique, then we can assume without loss of generality that $c_{2N-1}=0$ and $c_{2N}=1$ and that \eqref{alternated} is satisfied for all $n\geq N-1$. We claim that \begin{equation}\label{lexes}
        (c_{2N+i})\leq_{lex} (01)^\infty.
    \end{equation} Suppose on the contrary that $(c_{2N+i})>_{lex}(01)^\infty$. Since $c_{2N-1}c_{2N}=01$, then for some $n\geq N-1$ one has $c_{2n+1}c_{2n+2}c_{2n+3}=011$. This, together with \eqref{alternated}, implies that 
    $\sum_{i=1}^\infty c_{2n+1+i}p_i\geq p_{2n+2}+p_{2n+3}\geq p_{2n+2}+\sum_{i=2}^\infty p_{2n+2i}=\sum_{i=1}p_{2n+2i}^\infty\geq p_{2n+1}$. On the other hand, since $c_{2n+1}=0$ then, by Proposition \ref{p2u}, $\sum_{i=1}^\infty c_{2n+1+i}p_i<p_{2n+1}$. Then assuming $(c_i)>_{lex}(01)^\infty$ implies a contradiction and we get the proof \eqref{lexes}.
    
    Now, we have that $(c_{2N+i})^\infty\not=(01)^\infty$, because otherwise one would have $c_{2N-1}=0$ and $\sum_{i=1}^\infty c_{2N-1+i}p_{2N-1+i}=\sum_{i=0}^\infty p_{2N+2i}\geq p_{2N-1}$, thus contradicting the uniqueness of $(c_i)$.
    In view of above arguments, we have $c_{2N-1}c_{2N}=01$ and $(c_{2N+i})<_{lex}(01)^\infty$.
Then there exists $n\geq N$ such that $c_{2n}c_{2n+1}c_{2n+2}=100$. We then have
\begin{align*}\sum_{i=1}^\infty c_{2n+i}p_{2n+i}&\leq \sum_{i=3}^\infty p_{2n+i}=\sum_{i=1}^\infty p_{2n+i}-p_{2n+1}-p_{2n+2}\\
&\leq \sum_{i=1}^\infty p_{2n+i}-p_{2n+1}-\sum_{i=1}^\infty p_{2n+2i+1}=\sum_{i=1}^\infty p_{2n+i}-\sum_{i=0}^\infty p_{2n+2i+1}\\
&\leq \sum_{i=1}^\infty p_{2n+i}-p_{2n}.\end{align*}
On the other hand, we assume that $(c_i)$ is unique, then in view of \eqref{p2u} $c_{2n}=1$ implies $\sum_{i=1}^\infty c_{2n+i}p_{2n+i}>\sum_{i=1}^\infty p_{2n+i}-p_{2n}$. Then $(c_i)$ cannot be unique and this concludes the proof.  
    \end{proof}
    
We provide an example of application.
\begin{example}[Alternated perturbations of $\varphi$-expansions]
    If $(p_i)$ is a Kakeya sequence such that
    $p_n=\varphi^{-n}(1+(-1)^n\varepsilon_n)$ for all $n$ and for some positive sequence $(\varepsilon_i)\subset[0,1)$ -- recall that sequences $(\varepsilon_i)$ satisfying \eqref{perturbation} ensure that $(p_i)$ is a Kakeya sequence.
We have for all $n$
    $$\sum_{i=0}^{\infty} p_{2n+2i+1}\leq\sum_{i=0}^{\infty} \frac{1}{\varphi^{2n+2i+1}}=\frac{1}{\varphi^{2n}}\leq p_{2n}$$
    and
    $$\sum_{i=0}^{\infty} p_{2n+2i}\geq \sum_{i=0}^{\infty} \frac{1}{\varphi^{2n+2i}}=\frac{1}{\varphi^{2n-1}}\geq p_{2n-1}.$$
    By Proposition \ref{alternatedp} there are not unique expansions with respect to $(p_i)$.
\end{example}
    \medskip

\subsection{Proof of Theorem \ref{thm2} and proof of Corollary \ref{cor1}}\label{s42}
\begin{proof}[Proof of Theorem \ref{thm2}]
Let $N$ be such that \eqref{cnu1} is satisfied and consider $(c_i)=0^N(10)^\infty$. 
In view of Proposition \ref{p2u}, we need to prove that for all $n\geq1$ 
\begin{equation}\label{testu1}
\begin{split}c_n=0\quad \Rightarrow \sum_{i=n+1}^\infty c_ip_i<p_n\\
c_n=1\quad \Rightarrow \sum_{i=n+1}^\infty c_ip_i>-p_n+\sum_{i=n+1}^\infty p_i.
\end{split}
\end{equation}
Now assume that $c_n=0$. Then either $n< N$  or $(c_{n+i})=(10)^\infty$. In the first case we obtain by \eqref{cnu1}
$$\sum_{i=n+1}^\infty c_ip_i=\sum_{i=N+1}^\infty c_ip_i=\sum_{i=0}^\infty p_{N+2i+1}<p_{N}\leq p_n.
$$
If otherwise $n\geq N$ then, again by \eqref{cnu1}
$$\sum_{i=n+1}^\infty c_ip_i=\sum_{i=0}^\infty p_{n+2i+1}<p_n.
$$
Finally if $c_n=1$ then $n> N$ and $(c_{n+i})=(01)^\infty$. Therefore
$$\sum_{i=n+1}^\infty c_ip_i=\sum_{i=1}^\infty p_{n+2i}=\sum_{i=n+1}^\infty p_i-\sum_{i=0}^\infty p_{n+2i+1}>\sum_{i=n+1}^\infty p_i-p_n$$
and this proves the uniqueness of $(c_i)$.

\medskip We now prove the second part of the claim. Assume that $p_n\leq 2 p_{n+1}$ for all  $n\geq N$, where $N\geq 1$. Then $p_{n+i}\geq 2^{-i}p_n$ for all $n\geq N$ and, consequently,
\begin{equation}\label{2est}
    \sum_{i=1}^{\bar n-n} p_{n+i}\geq 
    p_n\sum_{i=1}^{\bar n -n} 2^{-i}=p_n\left(1-2^{-\bar n+n})\right)\qquad \text{for all } \bar n>n\geq N.
\end{equation} First we show that if $\sum_{i=n+1}^\infty p_i > p_{n-1}$ for infinitely many $n$ then there are not unique expansions with respect to $(p_i)$, which proves that \eqref{csu1} is a necessary condition for the existence of unique expansions with respect to $(p_i)$. Fix $n\geq N$ and consider $\bar n>n$ such that $\sum_{i= \bar n+1}^\infty p_i > p_{ \bar n-1}$. We then obtain (also using \eqref{2est})  \begin{align*}
\sum_{i=1}^\infty p_{n+i}&= \sum_{i=1}^{\bar n-n-1}p_{n+i}+p_{\bar n}+\sum_{i=\bar n+1}^{\bar \infty}p_i>\sum_{i=1}^{\bar n-n-1}p_{n+i}+p_{\bar n}+p_{\bar n-1}\\
&\geq p_n\left(1-2^{-(\bar n-n-1)}\right)+p_{\bar N-1}+2^{-(\bar n-n-1)}p_n=p_n+p_{\bar n}\\
&>p_n \qquad\qquad \text{for all }n\geq N.
\end{align*}
Then the shifted Kakeya sequence $(p_{N+i})$ meets the assumptions of \cite[Theorem 1.4]{BKL21} and this implies that every $x_N\in (0,\sum_{i=1}^\infty p_{N+i})$ has a continuum of different expansions. Now, let be $(c_i)$ be a binary sequence. By Proposition \ref{p2u}, if $(c_i)$ is finite then it is not unique. If otherwise $(c_i)$ is infinite, setting $x:=\sum_{i=1}^\infty c_ip_i$, then $x_N:=x-\sum_{i=1}^N c_ip_i=\sum_{i=N+1}^\infty c_ip_i$ belongs to the set $(0,\sum_{i=1}^\infty p_{N+i})$. In view of the above arguments,  there exists a continuum of sequences $(\hat c_{N+i})$ expanding the rest $x_N$ and, consequently, each of them provides a different expansion of $x$ of the form $c_1\cdots c_N(\hat c_{N+i})$.

To prove that the uniqueness of $(c_i)$ implies \eqref{csu2} we argue as follows. By Proposition \ref{p2u} the sequence $(c_i)$ must be infinite, that is it cannot end with $(0)^\infty$ or $(1)^\infty$.  This implies that there exist infinite $n$ such that $c_n\not= c_{n+1}$. In particular, there exists an index sequence $n_j$ such that $c_{n_j}=0$ and $c_{n_j+1}=1$, for all $j\geq 1$. Then, again Proposition \ref{p2} implies that
for all $j\geq 1$
$$p_{n_j+1}+\sum_{i=n_j+2}^\infty c_ip_i=\sum_{i=n_j+1}^\infty c_ip_i<p_{n_j}$$
and, since $c_{n_j+1}=1$, then
$$\sum_{i=n_j+2}^\infty c_ip_i=\sum_{i=n_j+1}^\infty c_ip_i-p_{n_j+1}+\sum_{i=n_j+2}^\infty p_i.$$
By comparing above two inequalities we deduce
$$\sum_{i=n_j+2}p_i<p_{n_j}\qquad \text{for all }j\geq 1$$
and, consequently, \eqref{csu2}.
\end{proof}
\begin{proof}[Proof of Corollary \ref{cor1}]
 Assume first that $p_{n}\geq \varphi p_{n+1}$ for all $n\geq N$, where $N$ is a sufficiently large integer, and $p_{n}>\varphi p_{n+1}$ for infinitely many $n$. Iterating these relations we obtain $p_{n}\geq \varphi^k p_{n+k}$ for all $k\geq 1,n\geq N$ and $p_{n}> \varphi^k p_{n+k}$ for infinitely many $k$. Then we have for $n>N$
 \begin{align*}\sum_{i=0}^\infty p_{n+2i}< \sum_{i=0}^\infty p_{n-1}\varphi^{-2i-1}= p_{n-1}.\end{align*}
Then by Theorem \ref{thm2} we have that $0^N(10)^\infty$ is a unique expansion.

Assume now that $p_{n}\leq \varphi p_{n+1}$ for all $n\geq N$. Then
\begin{align*}\sum_{i=n+1}^\infty p_{i}\geq \sum_{i=2}^\infty p_{n-1}\varphi^{-i}=\frac{p_{n-1}}{\varphi(\varphi-1)}= p_{n-1}.\end{align*}
Moreover $p_n\leq 2 p_{n+1}$ and, consequently, $(p_i)$ does not meet the necessary conditions stated in Theorem \ref{thm2} for the existence of unique expansions. 
\end{proof}

\section{Optimal expansions of Kakeya sequences}\label{s3}

We begin with a definition. 

\begin{definition}[Normalized error, optimal expansions, and optimal Kakeya sequences]\label{optdef} Let $(p_i)$ be a Kakeya sequence. We define the \emph{normalized error} associated to an expansion $(c_i)$ of $x\in[0,\sum_{i=1}^\infty p_i]$ as $$\Theta_n((c_i)):=r_n\Big(x-\sum_{i=1}^n c_ip_i\Big)$$
where $r_n:=\sum_{i=1}^\infty p_i/\sum_{i=1}^np_i$. 
The expansion $(c_i^*)$ of a positive real number is \emph{optimal} if any other expansion $(c_i)$ of $x$ satisfies $$\Theta_n((c^*_i))\leq \Theta_n((d_i))\qquad \text{ for all $n
\geq 1$.}$$
Finally $(p_i)$ is an optimal Kakeya sequence if for all $x\in(0,\sum_{i=1}^\infty p_i)$ there exists an optimal expansion.
\end{definition}

Next we show that only greedy expansions can be optimal.

\begin{proposition}\label{kakopt}
Assume that $(p_i)$ is a Kakeya sequence. If $(c_i^*)$ is the optimal expansion of $x\in[0,\sum_{i=1}^\infty p_i]$ with respect to $(p_i)$, then $(c_i^*)$ is the greedy expansion of $x$.
\end{proposition}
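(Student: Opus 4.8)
The plan is to argue by contradiction: suppose $(c_i^*)$ is optimal for $x$ but is not the greedy expansion. By Proposition \ref{p1}, the greedy expansion $(g_i)$ is the lexicographically greatest expansion of $x$, so if $(c_i^*)\neq(g_i)$ then $(c_i^*)<_{lex}(g_i)$, and there is a first index $N$ with $c_i^*=g_i$ for $i<N$, $c_N^*=0$, and $g_N=1$. The idea is that at step $N$ the greedy choice produces a strictly smaller truncation error than $(c_i^*)$ does, which will violate optimality at level $n=N$.

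Concretely, I would compute the (un-normalized) truncation errors at level $N$. Write $T_{N-1}=\sum_{i=1}^{N-1}c_i^*p_i=\sum_{i=1}^{N-1}g_ip_i$, the common partial sum. Then
\[
x-\sum_{i=1}^N c_i^*p_i = x-T_{N-1},\qquad x-\sum_{i=1}^N g_ip_i = x-T_{N-1}-p_N.
\]
Since $(c_i^*)$ is a valid expansion with $c_N^*=0$, the remaining terms $\sum_{i=N+1}^\infty c_i^*p_i = x-T_{N-1}\ge 0$; and since $g_N=1$ is a legal greedy choice we have (by the greedy rule \eqref{ga}) $p_N+T_{N-1}\le x$, i.e. $x-T_{N-1}-p_N\ge 0$. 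Hence both errors are nonnegative, and because $p_N>0$,
\[
0\le x-\sum_{i=1}^N g_ip_i = \big(x-\sum_{i=1}^N c_i^*p_i\big)-p_N < x-\sum_{i=1}^N c_i^*p_i.
\]
Since the normalized error $\Theta_N(\cdot)$ is just the truncation error multiplied by the positive constant $r_N=\sum_{i\ge1}p_i/\sum_{i=1}^N p_i$, this gives $\Theta_N((g_i))<\Theta_N((c_i^*))$, contradicting the optimality of $(c_i^*)$. Therefore $(c_i^*)$ must equal the greedy expansion.

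The only mild subtlety — and the step I'd be most careful about — is verifying that the greedy algorithm really does set $g_N=1$ under the stated hypotheses, i.e. that the inequality $p_N+\sum_{i=1}^{N-1}g_ip_i\le x$ genuinely holds, so that $x-\sum_{i=1}^N g_ip_i\ge 0$; this is immediate from the definition \eqref{ga} of the greedy sequence together with $g_N=1$, but it is the point on which the sign of the compared error hinges. Everything else is a one-line positivity comparison, so no heavy machinery (no symbolic dynamics, no shift-invariance) is needed here.
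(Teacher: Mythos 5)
Your proof is correct and takes essentially the same approach as the paper's: both compare the truncation errors of the two expansions at the first index $N$ where they differ and observe that the digit discrepancy shifts the level-$N$ error by exactly $p_N>0$, so optimality forces the lexicographically larger digit there. The paper phrases this directly (optimality against an arbitrary competing expansion implies $(c_i^*)$ is lexicographically greatest), while you run the contrapositive against the greedy expansion specifically, but the underlying computation is identical.
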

\begin{proof}
It suffices to show that $(c^*_i)$ is the lexicographic greatest expansion of $x$. To this end, let $(c_i)\not=(c^*_i)$ be another expansion of $x$ and let $n$ be the smallest index such that $c_n\not=c^*_i$ then 
$$\Theta_n((c^*_i))\leq \Theta_n((c_i))$$
and this implies that, since $c_i=c^*_i$ for all $i=1,\dots, n-1$ then $c_n^*p_n\geq c_np_n$ which implies, together with $c_n\not=c^*_n$ that $c_n<c^*_n$ that is, that $(c^*_i)$ is lexicographically greater than $(c_i)$. By the arbitrariness of $(c_i)$ we deduce that $(c_i^*)$ is the greedy expansion of $x$.
\end{proof}

We propose two examples of application of Theorem \ref{thm1} while postponing its proof in Section \ref{s3proofs} below.

\begin{example}[$q$-expansions]\label{ex1} Let $p_n=q^{-n}$ for some $q\in (1,2)$. In this case, the condition \eqref{except} is equivalent to the existence of some integer $k$ such that 
$$1=\sum_{i=1}^{k} q^{-i}.$$
We then recover by a different proof the result in \cite[Theorem 1.3]{DDKL12}, implying that optimal $q$-expansions 
exist of all $x\in(0,\frac{1}{q-1})$ if and only if the base $q$ belongs to the exceptional set 
$$P:=\left\{q\in (1,2)\mid 1=\sum_{i=1}^n q^{-i}\, \text{ for some }n\in\NN\right\}.$$
Also note that the minimal value in $P$ is the Golden Ratio $\varphi$.\end{example} In classical theory of $q$-expansion $\varphi$ represents an important threshold: every real number in $(0,1/(q-1))$ has a continuum of different $q$-expansions if and only if $q<\varphi$ \cite{EJK90}  or, equivalently, if and only if $1/q<1/q^2+1/q^3$. This phenomenon generalizes to Kakeya expansions. 
Indeed, if $(p_i)$ is a Kakeya expansion and if $p_{n}<\sum_{i=n+1}^{n+1+k_n} p_i$ for all positive sequences of natural numbers $(k_n)$, then in particular
$p_n< p_{n+1}+p_{n+2}$ for all $n\geq 1$. 
Iterating this relation we deduce $p_n\leq \sum_{i=0}^\infty p_{n+2i+1}$ and, consequently, 
$$\sum_{i=n+2}^\infty p_i=\sum_{i=1}^\infty p_{n+2i+1}+\sum_{i=1}^\infty p_{n+2i}\geq p_{n+1}+p_{n+2}\geq p_n\qquad \forall n\geq 1.$$
Then, as showed in \cite{BKL21}, every real number $x\in (0, \sum_{i=1}^\infty p_i)$ as a continuum of different expansions with respect to $(p_i)$.

\begin{example}[Fibonacci expansions]
We consider the Fibonacci expansion
$$x=\sum_{i=1}^\infty \frac{c_i}{F_{i+1}}\qquad (c_i)\in \{0,1\}^\NN$$
where $F_i$ are the Fibonacci numbers
$$F_1:=1,\quad F_2:=1\quad F_{i+2}=F_{i+1}+F_{i}\quad i=1,2,\dots.$$ The sequence $(F^{-1}_{i+1})$ is a strictly decreasing Kakeya sequence \cite{BKL21} for which \eqref{except} does not hold. Then by Theorem \ref{thm1} there exists some real number $x\in (0,\sum_{i=1}^\infty F_{i+1}^{-1})$ that does not admit optimal expansions.  An example is provided by the number $$x=\frac{1}{F_4}+\frac{1}{F_5}=\frac{1}{3}+\frac{1}{5}=\frac{8}{15}.$$ By Proposition \ref{kakopt} it suffices to prove that the greedy expansion of $x$ is not optimal. Now, a Fibonacci expansion of $x$ is provided by $(c_i)=0011(0)^\infty$. On the other hand, the greedy algorithm yields as first digits $0100$, therefore if $(c^*_i)$ is the greedy expansion of $x$ then
$x-\sum_{i=1}^5 c_i^*F_{i}^{-1}=\frac{1}{30}$
whereas $x-\sum_{i=1}^5 c_iF_{i}^{-1}=0$
from which we deduce
$\Theta_5((c^*_i))>\Theta_5((c_i))=0.$
\end{example}

\subsection{Proof of Theorem \ref{thm1} and proof of Corollary \ref{cor0}}\label{s3proofs}
We fix a strictly decreasing Kakeya sequence $(p_i)$ and we begin with some general remarks on  \eqref{except}. If there exists a  sequence of naturals $(k_n)$ satisfying 
$$p_{n}=\sum_{i=n+1}^{n+1+k_n} p_i\qquad \text{for all $n\geq 1$,}$$
then $k_n>0$ for all $n\geq 1$ and
\begin{equation}\label{golden}
p_{n}\geq p_{n+1}+p_{n+2} \qquad \text{ for all $n\geq 1$.}
\end{equation}
We need the next technical lemma.
\begin{lemma}\label{star}
Let $(p_i)$ be a strictly decreasing Kakeya sequence for which there exists a sequence $(k_n)$ of natural numbers such that
\begin{equation}
\label{except2}
p_{n}=\sum_{i=n+1}^{n+1+k_n} p_i\qquad \text{for all $n\geq 1$.}
\end{equation}
From any fixed positive $N_0\in\NN$, recursively define the sequence $N_{h+1}:=N_{h}+1+k_{N_h}$ for $h\geq 0$. Then for all $H\geq 1$ and for all $(d_i)\in\{-1,0,1\}^\infty$ satisfying
\begin{equation}\label{el1}
\sum_{i=N_0+1}^{N_h}d_ip_i<p_{N_0} \quad \forall h=1,\dots,H,
\end{equation}
one has either the identity 
\begin{equation}\label{el2}
 \sum_{i=N_0+1}^{N_H}d_ip_i=p_{N_0}-p_{N_H}   
\end{equation}
 or the estimate 
 \begin{equation}\label{el3}
 \sum_{i=N_0+1}^{N_H}d_ip_i\leq p_{N_0}-{\min \{p_{N_H-1},2p_{N_H}\}}.\end{equation}
\end{lemma}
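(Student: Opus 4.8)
The plan is to prove the dichotomy by induction on $H$, tracking at each stage whether the partial sum $\sum_{i=N_0+1}^{N_h}d_ip_i$ is exactly equal to $p_{N_0}-p_{N_h}$ (the ``tight'' case) or is bounded above by $p_{N_0}-\min\{p_{N_h-1},2p_{N_h}\}$ (the ``slack'' case), and showing that the slack case, once reached, propagates. The key structural fact to exploit is \eqref{except2}: since $p_{N_h}=\sum_{i=N_h+1}^{N_{h+1}}p_i$ by definition of $N_{h+1}=N_h+1+k_{N_h}$, the block of digits $d_i$ with indices in $(N_h,N_{h+1}]$ can contribute at most $\sum_{i=N_h+1}^{N_{h+1}}p_i=p_{N_h}$ to the sum. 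So passing from step $h$ to step $h+1$ adds at most $p_{N_h}$ and at least $-p_{N_h}$.

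First I would set up the base case $H=1$: the hypothesis \eqref{el1} gives $\sum_{i=N_0+1}^{N_1}d_ip_i<p_{N_0}$, and since this is a sum of $\pm p_i$ and $0$'s over indices in $(N_0,N_1]$, I need the sharp statement that a value strictly below $p_{N_0}=\sum_{i=N_0+1}^{N_1}p_i$ which is realizable as $\sum d_ip_i$ is either exactly $p_{N_0}-p_{N_1}$ or at most $p_{N_0}-\min\{p_{N_1-1},2p_{N_1}\}$. The point is that the largest value below $p_{N_0}$ is obtained by taking all $d_i=1$ except flipping exactly the last one to $0$, giving $p_{N_0}-p_{N_1}$; the next largest possibilities are flipping the second-to-last digit to $0$ (loss $p_{N_1-1}$) or flipping the last digit to $-1$ (loss $2p_{N_1}$), and any other deviation costs at least as much because $(p_i)$ is strictly decreasing and Kakeya. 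This is essentially a finite optimization over the block, and I would phrase it as: if $\sum_{i=N_0+1}^{N_1}d_ip_i > p_{N_0}-\min\{p_{N_1-1},2p_{N_1}\}$ then necessarily $d_i=1$ for $i=N_0+1,\dots,N_1-1$ and $d_{N_1}=0$, forcing equality in \eqref{el2}.

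For the inductive step, suppose the claim holds for $H$ and consider $H+1$. Apply the inductive hypothesis to get that $\sum_{i=N_0+1}^{N_H}d_ip_i$ is either $p_{N_0}-p_{N_H}$ or $\le p_{N_0}-\min\{p_{N_H-1},2p_{N_H}\}$. In the slack subcase, I would use that the additional block contributes at most $p_{N_H}=\sum_{i=N_H+1}^{N_{H+1}}p_i$, so $\sum_{i=N_0+1}^{N_{H+1}}d_ip_i \le p_{N_0}-\min\{p_{N_H-1},2p_{N_H}\}+p_{N_H}$; since $(p_i)$ is strictly decreasing, $\min\{p_{N_H-1},2p_{N_H}\}-p_{N_H}=\min\{p_{N_H-1}-p_{N_H},p_{N_H}\}\ge \min\{p_{N_{H+1}-1},2p_{N_{H+1}}\}$ would need checking — here I would invoke \eqref{golden}, i.e.\ $p_{N_H}\ge p_{N_H+1}+p_{N_H+2}$, and monotonicity, to absorb the bound into $p_{N_0}-\min\{p_{N_{H+1}-1},2p_{N_{H+1}}\}$. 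In the tight subcase $\sum_{i=N_0+1}^{N_H}d_ip_i=p_{N_0}-p_{N_H}$, the remaining block over $(N_H,N_{H+1}]$ must satisfy $\sum_{i=N_H+1}^{N_{H+1}}d_ip_i < p_{N_H}$ (from \eqref{el1} with $h=H+1$, after subtracting), so I apply exactly the base-case analysis with $N_0$ replaced by $N_H$: either this block sum equals $p_{N_H}-p_{N_{H+1}}$, yielding \eqref{el2} at level $H+1$ by adding, or it is $\le p_{N_H}-\min\{p_{N_{H+1}-1},2p_{N_{H+1}}\}$, yielding \eqref{el3} at level $H+1$.

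The main obstacle I expect is the base-case block optimization: proving cleanly that the only way to exceed $p_{N_0}-\min\{p_{N_1-1},2p_{N_1}\}$ while staying strictly below $p_{N_0}$ is the configuration $1^{k}0$ on the block. This requires carefully using both that $(p_i)$ is strictly decreasing and the telescoping identity $p_{N_0}=\sum_{i=N_0+1}^{N_1}p_i$ coming from \eqref{except2}, together with the Kakeya tail inequalities to control what happens when a digit is set to $-1$; one has to rule out, for instance, compensating a $-1$ somewhere early with a cascade of later $+1$'s, which is where the strict inequality $p_i>p_{i+1}$ and \eqref{golden} do the work. Once this finite lemma is nailed down, the induction is bookkeeping.
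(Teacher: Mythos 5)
Your proposal is correct and follows essentially the same route as the paper's proof: induction on $H$ with the tight/slack dichotomy, the block identity $p_{N_h}=\sum_{i=N_h+1}^{N_{h+1}}p_i$ coming from \eqref{except2}, the base-case analysis applied to the new block in the tight subcase, and slack propagation via \eqref{golden} and monotonicity in the other. The base-case ``block optimization'' you flag as the main obstacle is in fact a one-line argument, with no cascades to rule out: letting $J$ be the smallest index in $\{N_0+1,\dots,N_1\}$ with $d_J\le 0$, one has $\sum_{i=N_0+1}^{N_1}d_ip_i\le \sum_{i=N_0+1}^{N_1}p_i-p_J=p_{N_0}-p_J$ no matter what the later digits are, which immediately yields the three cases $J<N_1$ (loss at least $p_{N_1-1}$), $J=N_1$ with $d_J=0$ (equality \eqref{el2}), and $J=N_1$ with $d_J=-1$ (loss $2p_{N_1}$).
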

\begin{proof}
First of all we remark by definition, $p_{N_h}=\sum_{i=N_{h}+1}^{N_{h+1}}p_i$. Moreover since $(p_i)$ is strictly decreasing then $(k_n)$ is positive and this implies $N_{h-1}+2\leq N_h$ for all $h\geq 1$. 
We prove the claim by induction on $H$. If $H=1$ then \eqref{el1} implies that $$\sum_{i=N_0+1}^{N_1}d_ip_i<\sum_{i=N_0+1}^{N_1}p_i$$ therefore there exists a smallest $J$ between $N_0+1$ and $N_1$ such that $d_J\leq 0$. If $J<N_1$ then $$\sum_{i=N_0+1}^{N_1}d_ip_i\leq \sum_{i=N_0+1}^{N_1}p_i-p_J=p_{N_0}-p_J\leq p_{N_0}-p_{N_1-1}.$$
If otherwise $J=N_1$ and $d_J=0$, we readily get \eqref{el2}. Finally if $J=N_1$ and $d_J=-1$, then  $\sum_{i=N_0+1}^{N_1}d_ip_i=p_{N_0}-2p_{N_1}$.
This concludes the base of the induction. 

Assume now $H>1$. The inequality \eqref{el1} yields in particular 
$$\sum_{i=N_0+1}^{N_{H-1}}d_ip_i<p_{N_0}$$
and, by inductive hypothesis, either $\sum_{i=N_0+1}^{N_{H-1}}d_ip_i=p_{N_0}-p_{N_{H-1}}$ or
$\sum_{i=N_0+1}^{N_{H-1}}d_ip_i\leq p_{N_0}-{\min\{p_{N_{H-1}-1},2p_{N_{H-1}}\}}$. We distinguish the latter two cases.
\begin{itemize}
    \item Let $\sum_{i=N_0+1}^{N_{H-1}}d_ip_i=p_{N_0}-p_{N_{H-1}}$. Since by \eqref{el1} we have 
    $\sum_{i=N_0+1}^{N_{H}}d_ip_i<p_{N_0}$, then $$\sum_{i=N_{H-1}+1}^{N_{H}}d_ip_i<p_{N_{H-1}}.$$
    Arguing as in the case $H=1$ (by replacing in particular $N_0$ by $N_{H-1}$ and $N_1$ by $N_H$) we deduce that either 
    \begin{equation}\label{ele2s}\sum_{i=N_{H-1}+1}^{N_{H}}d_ip_i=p_{N_{H-1}}-p_{N_H}\end{equation}
     or  
    \begin{equation}\label{ele3s}\sum_{i=N_{H-1}+1}^{N_{H}}d_ip_i\leq p_{N_{H-1}}-{\min\{p_{N_H-1},2p_{N_H}\}}.\end{equation}
    The relations \eqref{ele2s} and \eqref{ele3s} respectively imply \eqref{el2} and \eqref{el3} and we are done. 
    \item Let $\sum_{i=N_0+1}^{N_{H-1}}d_ip_i\leq p_{N_0}-{\min\{p_{N_{H-1}-1},2p_{N_{H-1}}\}}$. Since $(p_i)$ is strictly decreasing then, in view of \eqref{golden}, $-p_{N_{H-1}-1}+p_{N_{H-1}}\leq -p_{N_{H-1}+1}\leq -p_{N_H}$. Then
    \begin{align*}
        \sum_{i=N_{0}+1}^{N_{H}}d_ip_i&=\sum_{i=N_0+1}^{N_{H-1}}d_ip_i+\sum_{i=N_{H-1}+1}^{N_H}d_ip_i\\&\leq p_{N_0}-{\min\{p_{N_{H-1}-1},2p_{N_{H-1}}\}}+p_{N_{H-1}}\\&= 
        p_{N_0}-{\min\{p_{N_{H-1}-1}-p_{N_{H-1}},p_{N_{H-1}}\}}\\
        &\leq p_{N_0}-p_{N_{H-1}+1}\leq p_{N_0}-p_{N_H-1}
    \end{align*}
    and this yields \eqref{el3} hence the claim.
\end{itemize}
\end{proof}
\begin{proof}[Proof of Theorem \ref{thm1}]
We begin by showing the if part. Let $x\in[0,\sum_{i=1}^\infty p_i]$ and let $(c_i^*)$ be the greedy expansion of $x$. We want to prove that $(c_i^*)$ is optimal. If $(c_i^*)$ unique then this is trivially true. Otherwise we can assume $(c_i^*)\not=(0)^\infty,(1)^\infty$ and that $x$ admits a different expansion $(c_i)$. Since $(c_i^*)$ is greedy, then it is  lexicographically greater than $(c_i)$. 
Consider the (non-normalized error) sequences $\Theta_n^*:=x-\sum_{i=1}^n c^*_ip_i$ and $\Theta_n :=x-\sum_{i=1}^n c_ip_i$. 
Now define $d_n:=c_n-c_n^*$ for all $n\geq 1$k, note that $(d_i)\in\{-1,0,1\}^\infty$ and the identity
\begin{equation}\label{ep1}
\Theta_n^*:=x-\sum_{i=1}^nc_i^*p_i=x-\sum_{i=1}^n c_ip_i+\sum_{i=1}^nd_ip_i=\Theta_n-\sum_{i=1}^n d_ip_i.
    \end{equation}
To conclude that $(c_i^*)$ is optimal it suffices to prove that \begin{equation}\label{claim}
\Theta_n^*\leq \Theta_n \qquad \text{for all $n\geq 0$}.\end{equation} We proceed by induction on $n$. If $n=0$ then $\Theta_0^*=x=\Theta_0$ and we are done. Assume now $n\geq 1$ and, as inductive hypothesis, that
\begin{equation}\label{ih}
\Theta_m^*\leq \Theta_m \qquad \text{for all } m=0,\dots,n-1.
\end{equation}
    Since $\Theta_0^*=\Theta_0$ then there exists a largest index $N\leq n+1$ such that $\Theta_{N-1}^*=\Theta_{N-1}$. Note that this implies $\sum_{i=N}^\infty c^*_ip_i=
    \sum_{i=N}^\infty c_ip_i$.
     Since $N$ is by definition the largest index with this property, either $N=n+1$ and $\Theta_n^*=\Theta_n$ (in this case we completed the inductive step) or $N\leq n$ and $c_N^*\not= c_N$. Now let $N\leq n$ and define $x_N:=\sum_{i=1}^{N-1}p_i+\sum_{i=N}^\infty c^*_ip_i$. Using Proposition \ref{p2}, we may conclude that $1^{N-1}(c^*_{N-1+i})=11\cdots1c_Nc_{N+1}\cdots $ is the greedy expansion of $x_N$. Also $1^{N-1}(c_{N-1+i})$ is an expansion of $x_N$ and, this implies that it is lexicographically smaller that $1^{N-1}(c^*_{N-1+i})$. Since the first $N-1$ digits of the two sequences of $x_N$ are identically $1$, the lexicographic comparison yields that the tail $(c^*_{N-1+i})$ is lexicograpchically greater than $(c_{N-1+i})$. Since we remarked above that $c_N^*\not= c_N$, then $c^*_N=1$ and $c^*_N=0$. Therefore 
    \begin{equation}\label{tn}\begin{split}
        \Theta_n^*&=x-\sum_{i=1}^{N-1}c^*_ip_i-\sum_{i=N}^nc^*_ip_i=\Theta^*_{N-1}-p_N-\sum_{i=N+1}^nc^*_ip_i\\
        &=\Theta_n-p_N+\sum_{i=N+1}^nd_ip_i.\end{split}
    \end{equation}
    If $n=N$ then we trivially have $\Theta_n^*<\Theta_n$. If $n>N$, since by definition $N$ is the largest integer such that $\Theta_{N-1}^*=\Theta_{N-1}$, then by the inductive hypothesis \eqref{ih} we have $\Theta^*_m<\Theta_m$ for all $m=N,\dots,n-1$. Moreover, if $n\leq N_1:=N+k_N+1$ then $-p_N+\sum_{i=N+1}^n\leq -p_N+\sum_{i=N+1}^{N_1}p_i=0$ and we conclude $\Theta_n^*\leq \Theta_n$ also in this case. Finally consider the case $n>N_1$. Recursively define $N_0=N$ and $N_{h+1}=N_h+1+K_{N_h}$ for $h\geq 0$. Let $H$ be such that $N_H<n\leq N_{H+1}$. Using the inductive hypothesis \eqref{ih} with $m=N_h$ and the identity \eqref{tn} we obtain
    $$\sum_{i=N_0+1}^{N_h} d_ip_i=\Theta^*_{N_h-1}-\Theta_{N_h-1}+p_{N}<p_N\qquad \text{for all }h=1,\dots,H.$$ 
    The assumptions of Lemma \ref{star} are then satisfied and we get $\sum_{i=N+1}^{N_H}d_ip_i=p_N-p_{N_H}$ or $\sum_{i=N+1}^{N_H}d_ip_i\leq p_N-p_{N_H-1}$. Both cases imply the crucial estimate
    $$\sum_{i=N+1}^{N_H}d_ip_i\leq p_N-p_{N_H}.$$
    From this we deduce that 
    $$\sum_{i=N+1}^n d_ip_i\leq p_N-p_{N_H}+\sum_{i=N_{H}+1}^nd_ip_i\leq
    p_N-p_{N_H}+\sum_{i=N_{H}+1}^{N_{H+1}}p_i=p_N.
    $$
    Plugging this inequality into \eqref{tn} we deduce
    $\Theta^*_n=\Theta_n-p_N+\sum_{i=N+1}^nd_ip_i\leq \Theta_n$.
    This concludes the proof of the inductive step and consequently we obtain the claimed estimate \eqref{claim}, and the proof of the optimality of $(c^*_i)$. By the arbitrariness of the expanded number $x$, we deduce that every greedy expansion is optimal. 
    
    \medskip
    We now prove the only if part. If there is not a sequence $(k_n)$ satisfying \eqref{el1}, then there exist a smallest $n\geq 1$ for which there exists an integer $k_n\geq 1$ such that
    \begin{equation}\label{oi}
    \sum_{i=n+1}^{n+k_n}p_i<p_n<\sum_{i=n+1}^{n+k_n+1}p_i.
    \end{equation}
    By the minimality of $n$, if $n>1$ then
    \begin{equation}\label{m}
    p_m>\sum_{i=n+1}^{n+k_n+1}p_i \qquad \text{for all $m=1,\dots,n-1$}.
    \end{equation}
    To prove \eqref{m}, suppose on the contrary that $p_m\leq \sum_{i=n+1}^{n+k_n+1}p_i$ for some $m<n$. Using the monotonicity of $(p_i)$ and the left hand-side of \eqref{oi} we have
    $$p_{m+1}<p_m\leq \sum_{i=n+1}^{n+k_n+1}p_i=\sum_{i=n+1}^{n+k_n}p_i+p_{n+1+k_n}<p_n+p_{n+1+k_n}<p_{m+1}+p_{m+2}
    $$
    that is, setting $k_m=1$, we have that $m<n$ sastisfies \eqref{oi} and this contradicts the minimality of $n$.

    Let $x:=\sum_{i=n+1}^{n+1+k_n}p_i$. Then $(c_i):=0^n1^{k_n+1}(0)^\infty$ is an expansion of $x$. Denote by $(c^*_i)$ the greedy expansion of $x$. We use \eqref{m} to compute the first digits of $(c^*_i)$: the greedy algorithm yields $c_1^*\cdots c^*_n=0^{n-1}1$ -- one can also check this a posteriori using Proposition \ref{p2}. As above, denote by $\Theta^*_n:=x-\sum_{i=1}^nc^*_ip_i$ and $\Theta_n:=x-\sum_{i=1}^nc_ip_i$. Note that $\Theta_{n+1+k_n}=0$. On the other hand
    $$\Theta^*_n=x-\sum_{i=1}^nc^*_ip_i=\sum_{i=n+1}^{n+1+k_n}p_i-p_n<\sum_{i=n+1}^{n+1+k_n}p_i-\sum_{i=n+1}^{n+k_n}p_i=p_{n+1+k_n}.$$
    Using again the greedy algorithm in Proposition \ref{p1} we get $c^*_m=0$ for all $m=n+1,\dots,n+1+k_n$. This implies
    $$\Theta^*_{n+1+k_n}=\sum_{i=n+1}^{n+1+k_n}p_i-p_n>0=\Theta_{n+k+1}.$$
    Therefore $(c^*_i)$ is not optimal and this concludes the proof.
\end{proof}

\begin{proof}[Proof of Corollary \ref{cor0}]
By Theorem \ref{thm1} we have that every greedy expansion is also optimal, therefore if $x\in[0,\sum_{i=1}^\infty p_i]$ and if $(c_i)$ is an expansion of $x$, then 
we readily have for all $n\geq 1$
$$x-\sum_{i=1}^n c_i p_i\geq \Theta^*_n.$$
Furthermore if  $\bar x:=\sum_{i=1}^\infty p_i-x$, and if $(\hat c^*_i)$ is the lazy expansion of $x$, then $(1-\hat c_i^*)$ is the greedy expansion of $\bar x$. More generally if $(c_i)$ is an expansion of $x$ then $(1-c_i)$ is an expansion of $\bar x$. By above arguments
$$\bar x-\sum_{i=1}^n (1-c_i)p_i\geq \sum_{i=1}^n (1-\hat c_i^*)p_i$$
from which we deduce
$x-\sum_{i=1}^n c_ip_i\leq \sum_{i=1}^n \hat c_i^*p_i$
and this concludes the proof.
\end{proof}

\bibliographystyle{alpha}
\bibliography{biblio}
\end{document}